\documentclass{amsart}

\usepackage{amssymb,amsmath,amsfonts,mathptmx,cite,mathrsfs,url}

\usepackage{gastex}

\newtheorem{theorem}{Theorem}[section]
\newtheorem{problem}[theorem]{Problem}
\newtheorem{proposition}[theorem]{Proposition}
\newtheorem{lemma}[theorem]{Lemma}
\newtheorem{corollary}[theorem]{Corollary}

\theoremstyle{remark}
\newtheorem{remark}{Remark}

\usepackage{gastex}

\newcommand{\ais}{ai-semi\-ring}
\def\softd{{\leavevmode\setbox1=\hbox{d}%
    \hbox to 1.05\wd1{d\kern-0.4ex{\char039}\hss}}}

\predisplaypenalty=0

\numberwithin{equation}{section}

\begin{document}

\title{Semiring identities of finite inverse semigroups}
\thanks{Supported by the Russian Science Foundation (grant No. 22-21-00650)}
\author[S. V. Gusev]{Sergey V. Gusev}
\address{Institute of Natural Sciences and Mathematics\\
Ural Federal University\\ 620000 Ekaterinburg, Russia}
\email{sergey.gusb@gmail.com}

\author[M. V. Volkov]{Mikhail V. Volkov}
\address{Institute of Natural Sciences and Mathematics\\
Ural Federal University\\ 620000 Ekaterinburg, Russia}
\urladdr{http://csseminar.kmath.ru/volkov/}
\email{m.v.volkov@urfu.ru}

\date{}

\begin{abstract}
We study the Finite Basis Problem for finite additively idempotent semirings whose multiplicative reducts are inverse semigroups. In particular, we show that each additively idempotent semiring whose multiplicative reduct is a nontrivial rook monoid admits no finite identity basis, and so do almost all additively idempotent semirings whose multiplicative reducts are combinatorial inverse semigroups.
\end{abstract}

\keywords{Additively idempotent semiring, Inverse semigroup, Natural order, Brandt monoid, Rook monoid, Finite Basis Problem}

\subjclass{16Y60, 20M18, 08B05}

\maketitle

\section{Introduction}

\subsection{Background and motivation}
An \emph{additively idempotent semiring} (\ais, for short) is an algebra $\mathcal{S}=(S, +, \cdot)$ of type $(2, 2)$ such that the additive reduct $(S,+)$ is a \emph{semilattice} (that is, a commutative idempotent semigroup), the multiplicative reduct $(S,\cdot)$ is a semigroup, and multiplication distributes over addition on the left and on the right, that is, $\mathcal{S}$ satisfies the identities $x(y+z)\approx xy+xz$ and $(y+z)x\approx yx+zx$. The class of \ais{}s is extensively studied in the literature as it includes many objects of importance for computer science, idempotent analysis, tropical geometry, and algebra such as, e.g., semirings of binary relations \cite{Jip17}, syntactic semirings of languages \cite{Polak01}, tropical semirings \cite{Pin98}, endomorphism semirings of semilattices \cite{JKM09}.

Recall that a set $\Sigma$ of identities valid in an algebra $\mathcal A$ is said to be an \emph{identity basis} for $\mathcal A$ if $\Sigma$ infers all identities holding in $\mathcal A$. An algebra $\mathcal A$ is \emph{finitely based} (FB) if it admits a finite identity basis; otherwise $\mathcal A$ is called \emph{nonfinitely based} (NFB). The question of classifying algebras of a certain sort with respect to the property of being FB/NFB is known as the Finite Basis Problem (FBP). Being very natural by itself, the FBP has also revealed several interesting and unexpected relations to many issues of theoretical and practical importance. In the study of various \ais{}s, the FBP has attracted considerable attention lately. In particular, we mention \cite{AM11,AEI03} and a series of Dolinka's papers~\cite{Dolinka07,Dolinka09a,Dolinka09b,Dolinka09c}. A recent breakthrough in the area is the paper \cite{JackRenZhao} by Jackson, Ren, and Zhao, who provided a wealth of surprising examples of finite NFB \ais{}s $(S, +, \cdot)$, including those whose multiplicative reducts $(S,\cdot)$ are FB semigroups.

Despite the great progress in~\cite{JackRenZhao}, the ultimate goal of classifying FB and NFB finite \ais{}s has not been achieved yet. The final section of~\cite{JackRenZhao} contains an extensive list of problems which the authors of that paper feel provide useful directions towards this goal. In the present note, we address one of these problems and exhibit new families of finite NFB \ais{}s. We specify the problem and describe our results in more detail in Sect.~\ref{sec:formulation}, after giving necessary definitions.

\subsection{Overview of main results}
\label{sec:formulation}

Recall that elements $x,y$ of a semigroup $(S,\cdot)$ are said to be \emph{inverses} of each other if $xyx=x$ and $yxy=y$. A~semigroup $(S,\cdot)$ is called \emph{inverse} if every its element has a unique inverse; the inverse of an element $x\in S$ is denoted by $x^{-1}$. Inverse semigroups can therefore be thought of as algebras of type (2,1) where the unary operation is defined by $x\mapsto x^{-1}$.

In every inverse semigroup $(S,\cdot,{}^{-1})$, the relation
\[
\le_{\mathrm{nat}}:= \{(x,y)\in S\times S\mid x=xx^{-1}y\}
\]
is a partial order compatible with both multiplication and inversion; see \cite[Section II.1]{Pet84} or \cite[pp. 21--23]{Law99}. This order is referred to as the \emph{natural partial order}. Given a subset $H\subseteq S$, the infimum of $H$ with respect to $\le_{\mathrm{nat}}$ may not exist, but if $\inf H$ exists, then so do $\inf(sH)$ and $\inf(Hs)$ for every $s\in S$, and one has $\inf(sH) = s(\inf H)$ and $\inf(Hs) = (\inf H)s$ \cite[Proposition 1.22]{Sch73}; see also \cite[Proposition 19]{Law99}. Therefore, if an inverse semigroup $(S,\cdot,{}^{-1})$ is such that the partially ordered set $(S,\le_{\mathrm{nat}})$ is an inf-semilattice, then letting
\begin{equation}
\label{eq:inf}
x\,+_{\mathrm{nat}}\,y:=\inf\{x,y\}
\end{equation}
for all $x,y\in S$ makes $(S,+_{\mathrm{nat}},\cdot)$ be an \ais. Such \ais{}s are called \emph{naturally semilattice-ordered inverse semigroups} in \cite{JackStokes}. The following is Problem~7.7(3) from \cite{JackRenZhao}:

\begin{problem}
\label{prob:jackson}
Which finite naturally semilattice-ordered inverse semigroups are finitely based, in either of the signatures $\{+,\cdot\}$ or $\{+,\cdot,0\}$?
\end{problem}
(The appearance of the alternative signature $\{+,\cdot,0\}$ is justified by the observation that any finite naturally semilattice-ordered inverse semigroup has zero: if 0 is the least element under the natural partial order, then it is easy to see that 0 is also the multiplicative zero.)

It is Problem~\ref{prob:jackson} that has given rise to the present paper. To describe our contribution, we have to recall a few further notions. A semigroup is called \emph{combinatorial} if all of its subgroups are trivial and \emph{periodic} if all of its monogenic subsemigroups are finite. Leech \cite[Example 1.21(d), item (iv)]{Leech95} observed that if an inverse monoid $(S,\cdot,{}^{-1},1)$ is periodic and combinatorial, then $(S,\le_{\mathrm{nat}})$ is an inf-semilattice. Of course, the requirement of being a monoid is not essential: if an inverse semigroup $(S,\cdot,{}^{-1})$ is periodic and combinatorial then so is the inverse monoid $(S^1,\cdot,{}^{-1},1)$ obtained by adjoining a fresh element 1 to the carrier set $S$ and letting $1\cdot x=x\cdot 1:=x$ for all $x\in S^1$ and $1^{-1}:=1$. Thus, every periodic (in particular, finite) and combinatorial inverse semigroup is naturally semilattice-ordered.

Consider the set $B_2^1$ consisting of the following six zero-one $2\times 2$-matrices:
\begin{equation}
\label{eq:b21}
\begin{tabular}{cccccc}
$\left(\begin{matrix} 0&0\\0&0\end{matrix}\right)$,
&
$\left(\begin{matrix} 1&0\\0&0\end{matrix}\right)$,
&
$\left(\begin{matrix} 0&1\\0&0\end{matrix}\right)$,
&
$\left(\begin{matrix} 0&0\\1&0\end{matrix}\right)$,
&
$\left(\begin{matrix} 0&0\\0&1\end{matrix}\right)$,
&
$\left(\begin{matrix} 1&0\\0&1\end{matrix}\right)$.
\end{tabular}
\end{equation}
They form an inverse semigroup (even an inverse monoid) under the usual matrix multiplication and transposition. The inverse monoid $(B_2^1,\cdot,{}^{-1})$ is known as the 6-\emph{element Brandt monoid}. Our first result answers Problem~\ref{prob:jackson} for ``almost all'' finite combinatorial naturally semilattice-ordered inverse semigroups:

\begin{theorem}
\label{thm:comb-nfb}
If $(B_2^1,\cdot,{}^{-1})$ satisfies all identities of a finite combinatorial inverse semigroup $(S,\cdot,{}^{-1})$, then the \ais\ $(S,+_{\mathrm{nat}},\cdot)$ admits no finite identity basis.
\end{theorem}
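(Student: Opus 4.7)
\emph{Proof plan.} My plan is to reduce the theorem to a strong non-finite-basis property of the ai-semiring $(B_2^1, +_{\mathrm{nat}}, \cdot)$ associated with the 6-element Brandt monoid, and then to lift it via the hypothesis $B_2^1 \in \mathbf{V}_{\mathrm{inv}}(S)$. Specifically, I would aim to prove that $(B_2^1, +_{\mathrm{nat}}, \cdot)$ is \emph{inherently nonfinitely based} as an ai-semiring, meaning that no locally finite ai-semiring variety containing it admits a finite axiomatization. Given such a result, finiteness of $S$ does the rest: the variety $\mathbf{V}_{\mathrm{ai}}((S, +_{\mathrm{nat}}, \cdot))$ is locally finite, contains $(B_2^1, +_{\mathrm{nat}}, \cdot)$ by Step~1 below, and hence fails to be finitely axiomatizable, forcing its finite generator $(S, +_{\mathrm{nat}}, \cdot)$ to admit no finite identity basis.

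\emph{Step 1 (ai-semiring lifting).} The first step is to upgrade the inclusion $B_2^1 \in \mathbf{V}_{\mathrm{inv}}(S)$ to $(B_2^1, +_{\mathrm{nat}}, \cdot) \in \mathbf{V}_{\mathrm{ai}}((S, +_{\mathrm{nat}}, \cdot))$. The key point is that on a finite combinatorial inverse semigroup $+_{\mathrm{nat}}$ is expressible by an $(\cdot,{}^{-1})$-term depending only on the period: fixing $n$ large enough that $z^n$ is idempotent for every $z$ in both $S$ and $B_2^1$, a uniform version of Leech's inf-semilattice observation yields a term $\tau_n(x,y)$ (one can check that $(xy^{-1})^n y$ does the job on $B_2^1$, and verify the analogous statement on $S$) such that $x +_{\mathrm{nat}} y = \tau_n(x,y)$ in both structures. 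Any ai-semiring identity valid in $(S, +_{\mathrm{nat}}, \cdot)$ can then be rewritten, by substituting $\tau_n$ for every occurrence of $+_{\mathrm{nat}}$, as an $(\cdot,{}^{-1})$-identity of $S$; this identity holds in $B_2^1$ by hypothesis, and reintroducing $+_{\mathrm{nat}}$ gives the original identity back in $(B_2^1, +_{\mathrm{nat}}, \cdot)$.

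\emph{Step 2 (inherent NFB of $(B_2^1, +_{\mathrm{nat}}, \cdot)$).} The main technical step is to exhibit an explicit sequence of ai-semiring identities $(\sigma_k \approx \varrho_k)_{k\ge 1}$ valid in $(B_2^1, +_{\mathrm{nat}}, \cdot)$, together with, for each $k$, a finite witness ai-semiring $A_k$ belonging to every locally finite variety containing $(B_2^1, +_{\mathrm{nat}}, \cdot)$ that satisfies $\sigma_j \approx \varrho_j$ for $j<k$ but fails $\sigma_k \approx \varrho_k$. A natural starting point is the classical identity sequence witnessing nonfinite basedness of the plain-semigroup reduct $(B_2^1,\cdot)$ due to Perkins, Mashevitzky and Sapir; these must be adapted so that the Brandt-type isoterm arguments remain intact in the presence of the distributive law and the semilattice $+_{\mathrm{nat}}$, exploiting the specific feature that all nontrivial meets in $(B_2^1,\le_{\mathrm{nat}})$ collapse to the multiplicative zero. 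I expect this step to be the main obstacle, since building the witnesses $A_k$ requires simultaneously preserving the combinatorics of the semigroup-level argument and a compatible additive semilattice expansion, and there is no obvious black-box that delivers both.
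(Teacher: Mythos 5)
Your Step 1 is correct and coincides with what the paper actually does: since a finite combinatorial inverse semigroup satisfies $x^p\approx x^{p+1}$ for some $p\ge 2$ and $(B_2^1,\cdot,{}^{-1})$ satisfies $x^2\approx x^3$, the addition $+_{\mathrm{nat}}$ is expressed by one and the same $(\cdot,{}^{-1})$-term in both structures (the paper uses $(xy^{-1})^px$ via Lemma~\ref{lem:as-ais}; your $(xy^{-1})^ny$ works just as well), so every semiring identity of $(S,+_{\mathrm{nat}},\cdot)$ transfers to $(B_2^1,+_{\mathrm{nat}},\cdot)$.

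The gap is Step 2, which carries all the real content and is not carried out; moreover, the framework you set up for it cannot succeed as stated. Inherent nonfinite basedness of $(B_2^1,+_{\mathrm{nat}},\cdot)$ means that every finite subset of its identities defines a variety that is \emph{not locally finite}; finite witness algebras $A_k$ can never certify that, so your scheme would at best reprove that $(B_2^1,+_{\mathrm{nat}},\cdot)$ itself is NFB (already known from \cite{Vol21}), and nonfinite basedness of one member of a variety implies nothing about a finite generator of a larger variety. The quantification over varieties ``containing $(B_2^1,+_{\mathrm{nat}},\cdot)$ and satisfying $\sigma_j\approx\varrho_j$ for $j<k$'' also fails to connect to $(S,+_{\mathrm{nat}},\cdot)$, because the $\sigma_j\approx\varrho_j$ are identities of the smaller algebra and need not hold in $S$. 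Whether $(B_2^1,+_{\mathrm{nat}},\cdot)$ is inherently NFB is nowhere established in the paper and is doubtful: the analogous statement for the plain semigroup $(B_2^1,\cdot)$ is known to be false by Sapir's Zimin-word criterion. The paper instead keeps $S$ in the picture throughout. Since $S$ is finite and combinatorial, it has a principal series whose factors are Brandt semigroups over trivial groups, so $(S,\cdot)$ satisfies the explicit identities $\mathbf v_{n,m}^{(h)}\approx(\mathbf v_{n,m}^{(h)})^2$ of Propositions~\ref{prop:id-fh} and~\ref{prop:id-fh+1}. The witnesses are Ka\softd{}ourek's inverse semigroups $S_k^{(h)}$: they violate these identities (Proposition~\ref{prop:Sn(h)}), yet every inverse subsemigroup generated by fewer than $k$ elements satisfies all identities of $(B_2^1,\cdot,{}^{-1})$ (Proposition~\ref{prop:kadourek1}), hence --- via the very inclusion your Step 1 provides --- all $<k$-variable identities of $(S,+_{\mathrm{nat}},\cdot)$. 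Both the violated identity and the witnesses depend on the height $h$ of the principal series of $S$, i.e., on structural data that your reduction discards; to complete your proof you would have to supply material of comparable strength to Sections~\ref{sec:height} and~\ref{sec:kadourek}, and the adaptation of Perkins--Mashevitzky-type sequences you sketch does not by itself produce it.
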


\begin{remark}
\label{rm:almostall}
Two algebras of the same type that satisfy the same identities are called \emph{equationally equivalent}. If a finite combinatorial inverse semigroup $(S,\cdot,{}^{-1})$ satisfies an identity that fails in the $6$-element Brandt monoid, then either $|S|=1$ or $(S,\cdot,{}^{-1})$ is equationally equivalent to either the 2-element semilattice or the 5-\emph{element Brandt semigroup} $(B_2,\cdot,{}^{-1})$ where $B_2$ consists of the first five matrices in \eqref{eq:b21}; see \cite[Section XII.4]{Pet84}, in particular, Corollary XII.4.14 therein. From this, it readily follows that up to equational equivalence, Theorem~\ref{thm:comb-nfb} does not apply to  only two nontrivial \ais{}s $(S,+_{\mathrm{nat}},\cdot)$ coming from a finite combinatorial inverse semigroup: these two are $(Y_2,+_{\mathrm{nat}},\cdot)$ where $(Y_2,\cdot)$ is the 2-element semilattice and $(B_2,+_{\mathrm{nat}},\cdot)$. It is known and easy to verify that the \ais\ $(Y_2,+_{\mathrm{nat}},\cdot)$ is FB (in fact, the single identity $xy\approx x+y$ forms an identity basis for $(Y_2,+_{\mathrm{nat}},\cdot)$). Thus, the 5-element Brandt semigroup remains the only finite combinatorial inverse semigroup with yet unknown answer to the FBP for its derived \ais.
\end{remark}

Another important family of naturally semilattice-ordered inverse semigroups is related to the symmetric inverse monoids; see \cite[Section IV.1]{Pet84} or \cite[Chapter I]{Law99} for an explanation of the role played by these monoids in the theory of inverse semigroups. For a non-empty set $X$, let $I(X)$ stand for the set of all partial one-to-one transformations on $X$. The \emph{symmetric inverse monoid} on $X$ is $(I(X),\cdot,{}^{-1})$ where for all $\alpha,\beta\in I(X)$, the product $\alpha\beta$ is the usual composition of transformations and $\alpha^{-1}$ is the inverse transformation of $\alpha$. The natural partial order $\le_{\mathrm{nat}}$ on $(I(X),\cdot,{}^{-1})$ is nothing but the usual extension order of transformations: $\beta\in I(X)$ extends $\alpha\in I(X)$ if $\alpha(x)=\beta(x)$ for each $x\in X$ at which $\alpha(x)$ is defined. Clearly, $(I(X),\le_{\mathrm{nat}})$ is an inf-semilattice: for any $\alpha,\beta\in I(X)$, their infimum is the transformation $\gamma$ defined on the set $\{x\in X\mid\alpha(x)=\beta(x)\}$ by the rule $\gamma(x):=\alpha(x)$. Thus, we get the naturally semilattice-ordered inverse semigroup $(I(X),+_{\mathrm{nat}},\cdot)$.

If the set $X$ is finite with $t$ elements, the symmetric inverse monoid on $X$ can be conveniently identified with the \emph{rook monoid} $\mathcal{R}_t:=(R_t,\cdot,{}^{-1})$ where $R_t$ is the set of all zero-one $t\times t$-matrices with at most one entry equal to 1 in each row and column and the operations are the usual matrix multiplication and transposition. (The name `rook monoid' suggested by Solomon \cite{Sol02} refers to the fact that matrices in  $R_t$ encode placements of nonattacking rooks on a $t\times t$ chessboard.) In this model, the addition $+_{\mathrm{nat}}$ is nothing but the Hadamard (entrywise) product of matrices: $(a_{ij})_{t\times t}+_{\mathrm{nat}}(b_{ij})_{t\times t}=(a_{ij}b_{ij})_{t\times t}$. Our second result solves the FBP for the `rook semirings' $(R_t,+_{\mathrm{nat}},\cdot)$.

\begin{theorem}
\label{thm:rook}
The \ais\ $(R_t,+_{\mathrm{nat}},\cdot)$ admits a finite identity basis if and only if $t=1$.
\end{theorem}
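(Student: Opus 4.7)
The plan is to treat $t=1$ by direct inspection and, for $t\ge 2$, to reduce to an inherent-NFB strengthening of Theorem~\ref{thm:comb-nfb}. When $t=1$, the set $R_1$ consists of the $1\times 1$ zero and identity matrices, and the induced \ais\ coincides with the 2-element semilattice \ais\ $(Y_2,+_{\mathrm{nat}},\cdot)$, which is FB by Remark~\ref{rm:almostall}. So only the direction $t\ge 2\Rightarrow\text{NFB}$ requires work.

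The key observation is that, for every $t\ge 2$, the \ais\ $(B_2^1,+_{\mathrm{nat}},\cdot)$ embeds into $(R_t,+_{\mathrm{nat}},\cdot)$ via the block map that sends each $2\times 2$ matrix $M$ from \eqref{eq:b21} to the $t\times t$ rook matrix having $M$ in the upper-left $2\times 2$ block and zeros elsewhere. Preservation of matrix multiplication, transposition, and the Hadamard product (which is $+_{\mathrm{nat}}$) is immediate from the block-diagonal shape. Note that Theorem~\ref{thm:comb-nfb} cannot be applied to $R_t$ itself: the group of units of $R_t$ is the symmetric group $S_t$, so for $t\ge 2$ the rook monoid is \emph{not} combinatorial.

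The plan is therefore to extract from the proof of Theorem~\ref{thm:comb-nfb} the stronger statement that $(B_2^1,+_{\mathrm{nat}},\cdot)$ is \emph{inherently} NFB in the class of \ais{}s: no locally finite variety of \ais{}s containing $B_2^1$ admits a finite identity basis. A standard NFB argument produces an infinite family of identities $\sigma_n$ valid in $B_2^1$ together with finite \ais{}s $T_n$ that satisfy every member of $\mathrm{Id}(B_2^1)$ in at most $n$ variables but violate $\sigma_n$; because the witnesses $T_n$ are themselves finite, they automatically lie in any locally finite overvariety of $B_2^1$, upgrading NFB to inherent NFB at no extra cost.

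Granted inherent NFB of $B_2^1$, the theorem follows in one line: since $R_t$ is finite, the variety it generates is locally finite; by the embedding above it contains $B_2^1$; hence it has no finite basis, so $(R_t,+_{\mathrm{nat}},\cdot)$ is NFB. The main technical obstacle is the third paragraph: checking that the identities and finite counterexample \ais{}s used in the proof of Theorem~\ref{thm:comb-nfb} genuinely survive enlargement to any locally finite overvariety, rather than relying essentially on the combinatorial character of the ambient inverse semigroup. Once this robustness check is carried out for the specific NFB witnesses produced there, Theorem~\ref{thm:rook} is complete.
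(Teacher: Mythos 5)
Your $t=1$ case and your embedding of $(B_2^1,+_{\mathrm{nat}},\cdot)$ into $(R_t,+_{\mathrm{nat}},\cdot)$ are fine and agree with the paper. The gap is in your third paragraph, and it is fatal. You claim that the NFB witnesses for $(B_2^1,+_{\mathrm{nat}},\cdot)$ upgrade to inherent NFB ``at no extra cost'' because they are finite and hence ``automatically lie in any locally finite overvariety of $B_2^1$.'' That is false on two counts. First, a locally finite variety is one whose finitely generated members are finite; it does not contain every finite algebra, so finiteness of the witnesses $T_n$ places them in no overvariety whatsoever. Second, and more to the point, to run the semantic scheme against $\mathrm{var}(R_t)$ you need $T_n$ to satisfy all identities of $R_t$ in fewer than $n$ variables (this part is fine, since $\mathrm{Id}(R_t)\subseteq\mathrm{Id}(B_2^1)$) \emph{and} to violate some identity that actually holds in $R_t$. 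The identity $\sigma_n$ of $B_2^1$ that $T_n$ refutes need not hold in the larger algebra $R_t$ at all, in which case $T_n$ refuting it says nothing about whether $T_n\in\mathrm{var}(R_t)$. Producing identities that survive the passage from $B_2^1$ to $\mathcal{R}_t$ while still failing in the witnesses is exactly the hard part, and it is where the whole of Sect.~\ref{sec:height} goes: the identities $\mathbf v_{n,m}^{(h)}\approx(\mathbf v_{n,m}^{(h)})^2$ must be re-engineered with a nesting depth $h$ that grows with a principal series of the target monoid (Propositions~\ref{prop:id-fh} and~\ref{prop:id-fh+1}), Proposition~\ref{prop:rook2and3} verifies them for $\mathcal{R}_2$ with $(h,m)=(2,2)$ and for $\mathcal{R}_3$ with $(h,m)=(4,6)$ --- the latter needing a determinant-parity trick to route around the non-abelian group $\mathrm{Sym}_3$ --- and Proposition~\ref{prop:Sn(h)} checks they still fail in the Ka\softd{}ourek semigroups. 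This is why Theorem~\ref{thm:main} carries the hypothesis that the multiplicative reduct of the target \emph{itself} satisfies \eqref{eq:IDh}; if $(B_2^1,+_{\mathrm{nat}},\cdot)$ were inherently NFB among \ais{}s, Theorems~\ref{thm:main} and~\ref{thm:comb-nfb-gen} (with their subgroup restrictions) would be pointless. Note also that $B_2^1$ is known \emph{not} to be inherently nonfinitely based even as a plain semigroup (Sapir), so there is no reason to expect the \ais\ expansion to be so, and certainly no ``standard argument'' delivers it.

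Separately, your plan says nothing usable about $t\ge 4$. Your uniform reduction would cover it only if the inherent-NFB claim held; absent that, you would need identities of the form \eqref{eq:IDh} valid in $\mathcal{R}_t$, and these are unavailable once $\mathrm{Sym}_t$ contains non-abelian subgroups that cannot be excised as in the $t=3$ case. The paper instead disposes of $t\ge4$ by an entirely different criterion: $\mathrm{Sym}_t$ then contains a non-abelian nilpotent subgroup (the dihedral group of order $8$), so \cite[Theorem~6.1]{JackRenZhao} applies directly. You would need to add this (or a substitute) as a separate case.
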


\begin{remark}
From the proofs in Sect.~\ref{sec:proofs}, it will be clear that the results of Theorems~\ref{thm:comb-nfb} and \ref{thm:rook} hold also with the signature $\{+,\cdot,0\}$.
\end{remark}

We employ the usual scheme of ``semantic'' proofs for the absence of a finite identity basis: to prove that a given \ais\ $\mathcal{S}$ has no identity basis involving less than any fixed number $k$ of variables (and hence, no finite identity basis), one constructs for each $k$, an \ais\ $\mathcal{S}_k$ and an identity $\mathbf w_k\approx \mathbf w'_k$ of $\mathcal{S}$ such that $\mathcal{S}_k$ satisfies all identities of $\mathcal{S}$ with less than $k$ variables but refutes the identity $\mathbf w_k\approx\mathbf w'_k$.  In Sects.~\ref{sec:height} and~\ref{sec:kadourek} we prepare these ingredients of the proof, and in Sect.~\ref{sec:proofs} we put them together to prove Theorem~\ref{thm:main}, a general result that provides a large class of NFB \ais{}s. Theorems~\ref{thm:comb-nfb} and \ref{thm:rook} then follow easily.

We assume the reader's acquaintance with basic notions and results of the theory of inverse semigroups. All these can be found in the early chapters of the monographs \cite{Law99,Pet84}.

\section{The identities $\mathbf v_{n,m}^{(h)} \approx (\mathbf v_{n,m}^{(h)})^2$}
\label{sec:height}

Here we aim to show that every finite inverse semigroup with abelian subgroups satisfies a specific identity involving only multiplication. We construct the identity by climbing up a principal series of the semigroup, and the construction works fine for semigroups with finite principal series whose factors are either abelian groups of finite exponent or Brandt semigroups over such groups. Recall that a \emph{principal series} of a semigroup $(S,\cdot)$ is a chain
\begin{equation}
\label{eq:series}
S_0\subset S_1\subset \dots \subset S_h=S
\end{equation}
of ideals $S_j$ of $(S,\cdot)$ such that such that there is no ideal of $(S,\cdot)$ strictly below $S_0$ nor strictly between $S_{j-1}$ and $S_j$ for $j=1,\dots,h$. By the \emph{factors} of the principal series \eqref{eq:series} we mean the Rees quotients $(S_j/S_{j-1},\cdot)$, $j=1,\dots,h$. To keep the premises of further statements compact, the expression $(h,m)$-\emph{semigroup} is used for any semigroup $(S,\cdot)$ that has a principal series
\eqref{eq:series} in which $(S_0,\cdot)$ is an abelian group of exponent dividing $m$ and each factor $(S_j/S_{j-1},\cdot)$, $j=1,\dots,h$, is a Brandt semigroup over an abelian group of exponent dividing $m$. It is easy to see that an $(h,m)$-semigroup is necessarily inverse.

We start our ascent from $(1,m)$-semigroups with $S_0=\{0\}$, where 0 is the zero of $(S,\cdot)$. In this case, $(S,\cdot)$ is just a Brandt semigroup over an abelian group of exponent dividing $m$.
As we need some calculations in Brandt semigroups, we recall how they are defined.  Let $I$ be a non-empty set and let $\mathcal{G}=(G,\cdot)$ be a group. The Brandt semigroup $\mathcal{B}_{G,I}$ over $\mathcal{G}$ has $B_{G,I}:=I \times G \times I\cup \{0\}$ as its carrier set, and the multiplication in $\mathcal{B}_{G,I}$ is defined by
\begin{align*}
&(\ell_1,g_1,r_1)\cdot (\ell_2,g_2,r_2):=
\begin{cases}
(\ell_1,g_1g_2,r_2)&\text{if }r_1=\ell_2,\\
0&\text{otherwise,}
\end{cases}&&\text{for all $\ell_1,\ell_2,r_1,r_2\in I,\ g_1,g_2\in G$,}\\
&(\ell,g,r)\cdot 0=0\cdot(\ell,g,r)=0\cdot0:=0&&\text{for all $\ell,r\in I,\ g\in G$.}
\end{align*}

Now we are going to introduce a family of words $\mathbf u_{n,k,m}$ used as building blocks for our identities. For any $i\ge 1$, let $X_i^{(1)}=\{x_1,x_2,\dots,x_i\}$. Now for any $n,k\ge 0$ with $n+k>0$ and for any $m\ge 1$, we define the following word over $X_{n+k}^{(1)}$:
\[
\mathbf u_{n,k,m}:= x_1x_2\cdots x_{n+k}\,\Bigl(x_nx_{n-1}\cdots x_1\cdot x_{n+1}x_{n+2}\cdots x_{n+k}\Bigr)^{2m-1}.
\]
For clarity, we specify how the words $\mathbf u_{n,k,m}$ with $k=0,1$ or $n=0,1$ look like:
\begin{align}
\mathbf u_{n,0,m}&=x_1x_2\cdots x_n\,\Bigl(x_nx_{n-1}\cdots x_1\Bigr)^{2m-1},\label{eq:un0}\\
\mathbf u_{n,1,m}&=x_1x_2\cdots x_{n+1}\,\Bigl(x_nx_{n-1}\cdots x_1x_{n+1}\Bigr)^{2m-1},\label{eq:un1}\\
\mathbf u_{0,k,m}&=\Bigl(x_1x_2\cdots x_k\Bigr)^{2m},\label{eq:u0k}\\
\mathbf u_{1,k,m}&=\Bigl(x_1x_2\cdots x_{k+1}\Bigr)^{2m}.\label{eq:u1k}
\end{align}

The next result could have been deduced from a known characterization of semigroup identities of Brandt semigroups~\cite{Mash79}; see also \cite{Rei08}, but we verify it by a direct computation.

\begin{lemma}
\label{lem:id-BI}
For any non-empty set $I$ and any abelian group $\mathcal{G}=(G,\cdot)$ of exponent dividing $m$, the semigroup $\mathcal{B}_{G,I}$ satisfies all identities $\mathbf u_{n,k,m} \approx \mathbf u_{n,k,m}^2$ with $n,k\ge 0$ and $n+k>0$.
\end{lemma}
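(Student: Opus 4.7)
\emph{Plan.} Fix a substitution $x_i\mapsto a_i\in B_{G,I}$. If any $a_i=0$ the identity holds vacuously, so assume $a_i=(\ell_i,g_i,r_i)$ throughout. If the substituted $\mathbf u_{n,k,m}$ already evaluates to $0$, then so does its square; the real task is to show that whenever $\mathbf u_{n,k,m}$ takes a nonzero value on such a substitution, that value is idempotent.

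The group component requires almost no work. Every variable $x_i$ occurs exactly $2m$ times in $\mathbf u_{n,k,m}$: once in the prefix $x_1\cdots x_{n+k}$ and once in each of the $2m-1$ copies of the bracketed factor. Since $(G,\cdot)$ is abelian with $g_i^m=1$, the product of the $g$-components equals $\prod_i g_i^{2m}=1_G$. Hence a nonzero value of the substituted word has the form $(\ell_1,1_G,r_*)$, where $r_*$ is the right index of the last letter. Such an element is idempotent iff $r_*=\ell_1$, so the heart of the proof is to verify this equality.

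The degenerate cases $n\in\{0,1\}$ are immediate: $\mathbf u_{n,k,m}$ is a pure $2m$-th power, as displayed in \eqref{eq:u0k} and \eqref{eq:u1k}, and any nonzero power of a Brandt element collapses to a form $(\ell,1_G,\ell)$. For $n\ge 2$, I would collect the linking conditions required for non-vanishing: $r_i=\ell_{i+1}$ for $1\le i\le n+k-1$ (all forward runs), $r_{i+1}=\ell_i$ for $1\le i\le n-1$ (the reversed block $x_n\cdots x_1$ in each tail factor), the junction $r_{n+k}=\ell_n$ where successive factors meet, and $r_1=\ell_{n+1}$ whenever $k\ge 1$. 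The first two families together yield $\ell_{i+2}=\ell_i$ and $r_{i+2}=r_i$ on $[1,n]$, so the $\ell$'s and $r$'s alternate by parity on that range; moreover $r_*=r_{n+k}=\ell_n$, reducing the problem to proving $\ell_n=\ell_1$.

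For $n$ odd, parity-alternation delivers $\ell_n=\ell_1$ at once. For $n$ even, parity-alternation only gives $\ell_n=\ell_2$, and one must use a junction condition to collapse the two parity classes: if $k\ge 1$, then $r_1=\ell_{n+1}=r_n$ combined with alternation forces all $\ell_i$ and $r_i$ with $i\le n$ to coincide; if $k=0$, then the junction $r_n=\ell_n$ plays the analogous role. In every case $r_*=\ell_1$, so the value is the idempotent $(\ell_1,1_G,\ell_1)$. I expect the only real obstacle to be bookkeeping: the parity split on $n$ and the boundary subcases $k=0$ and $m=1$ (where a couple of the junction conditions above are absent or redundant) need to be checked carefully, but no deeper idea is required.
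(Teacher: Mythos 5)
Your plan is correct and rests on exactly the same two pillars as the paper's proof: the group component is killed because every variable occurs $2m$ times and $\mathcal{G}$ is abelian of exponent dividing $m$, and the index component is forced by the linking conditions for non-vanishing together with the parity-alternation of the $\ell_i$ and $r_i$ on $\{1,\dots,n\}$. The cases $n\in\{0,1\}$ are dispatched identically in both arguments. The one organizational difference is how general $k$ is handled: the paper verifies the claim by hand only for $k=0$ and $k=1$ and then obtains $\mathbf u_{n,k,m}\approx\mathbf u_{n,k,m}^2$ for $k\ge 2$ by substituting the word $x_{n+1}x_{n+2}\cdots x_{n+k}$ for the single variable $x_{n+1}$ in the already-established identity $\mathbf u_{n,1,m}\approx\mathbf u_{n,1,m}^2$, whereas you run the full linking-condition analysis for arbitrary $k$ at once. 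The substitution trick buys less bookkeeping; your uniform treatment buys a self-contained computation. One small slip to repair when writing it up: your identification $r_*=r_{n+k}$ of the terminal right index is only valid for $k\ge1$; when $k=0$ the word ends in $x_1$, so $r_*=r_1$, and the target becomes $r_1=\ell_1$ rather than $\ell_n=\ell_1$. Your own $k=0$ subcase (using the junction $r_n=\ell_n$ to collapse the two parity classes, after which all $\ell_i$ and $r_i$ coincide) already delivers this, so the gap is cosmetic, but the blanket statement as written is false for $k=0$ and should be split accordingly.
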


\begin{proof}
If $n\in\{0,1\}$, the claim holds because $\mathbf u_{n,k,m}$ is of the form either~\eqref{eq:u0k} or~\eqref{eq:u1k} and it is known (and easy to verify) that $\mathcal{B}_{G,I}$ satisfies the identity $x^2 \approx x^{2+m}$.

Let $n\ge2$ and consider an arbitrary substitution $\tau\colon X_{n+k}^{(1)}\to B_{G,I}$. We aim to show that $\tau(\mathbf u_{n,k,m})$ is an idempotent. If $\tau(\mathbf u_{n,k,m})=0$, there is nothing to prove.
Thus, for the rest of the proof we assume that $\tau(\mathbf u_{n,k,m})\ne 0$. Then all values of the substitution $\tau$ lie in the set $I \times G \times I$ of non-zero elements of $\mathcal{B}_{G,I}$. For each $i\in\{1,2,\dots,n+k\}$, let $\tau(x_i)=(\ell_i,g_i,r_i)$, where $\ell_i,r_i\in I$ and $g_i\in G$. Denote by $e$ the identity element of the group $\mathcal{G}$.

First, consider the case $k=0$. If $\tau(\mathbf u_{n,0,m})\ne 0$, then due to \eqref{eq:un0}, we have
\begin{align*}
\tau(\mathbf u_{n,0,m})&=(\ell_{1},g_1^{2m}g_2^{2m}\cdots g_n^{2m},r_{1})&&\text{since $\mathcal{G}$ is abelian,}\\
                       &=(\ell_{1},e,r_{1})&&\text{since the exponent of $\mathcal{G}$ divides $m$.}
\end{align*}
We verify that $\ell_j=r_j$ for all $j=n,n-1,\dots,1$ by backwards induction. Since $x_n^2$ occurs as a factor in the word $\mathbf u_{n,0,m}$, we must have $\tau(x_n^2)\ne 0$, whence $\ell_n=r_n$. If $j>1$, both $x_{j-1}x_j$ and $x_jx_{j-1}$ occur as factors in $\mathbf u_{n,0,m}$. We then have
\begin{align*}
r_{j-1}&=\ell_j&&\text{since $\tau(x_{j-1}x_j)\ne 0$,}\\
       &=r_j&&\text{by the induction assumption,}\\
       &=\ell_{j-1}&&\text{since $\tau(x_jx_{j-1})\ne 0$.}
\end{align*}
Since $\ell_1=r_1$, we have $(\ell_{1},e,r_{1})^2=(\ell_{1},e,r_{1})$, that is,  $\tau(\mathbf u_{n,0,m})$ is an idempotent.

Now consider the case $k=1$. If $\tau(\mathbf u_{n,1,m})\ne 0$, then due to \eqref{eq:un1}, we have
\begin{align*}
\tau(\mathbf u_{n,1,m})&=(\ell_{1},g_1^{2m}g_2^{2m}\cdots g_n^{2m},r_{n+1})&&\text{since $\mathcal{G}$ is abelian,}\\
                       &=(\ell_{1},e,r_{n+1})&&\text{since the exponent of $\mathcal{G}$ divides $m$.}
\end{align*}
Since $\tau(x_1x_2\cdots x_{n+1}x_n)\ne 0$, we have
\[
r_{1}=\ell_{2},\,r_{2}=\ell_{3},\dots,r_{n-1}=\ell_{n},\,r_{n}=\ell_{n+1},\,r_{n+1}=\ell_{n}.
\]
Since $\tau(x_nx_{n-1}\cdots x_1x_{n+1})\ne 0$, we also have
\[
r_{n}=\ell_{n-1},\,r_{n-1}=\ell_{n-2},\dots,r_{2}=\ell_{1},\,r_{1}=\ell_{n+1}.
\]
Therefore, we obtain
\[
r_{n+1}=\ell_{n}=r_{n-1}=\ell_{n-2}=r_{n-3}=\dots=\begin{cases}
\ell_{1}&\text{if $n$ is odd},\\
r_{1}&\text{if $n$ is even}.
\end{cases}
\]
Besides that, if $n$ is even, then
\[
r_{1}=\ell_{n+1}=r_{n}=\ell_{n-1}=r_{n-2}=\ell_{n-3}=\dots=\ell_{1}.
\]
We see that $r_{n+1}=\ell_{1}$ in either case. Therefore, $(\ell_{1},e,r_{n+1})^2=(\ell_{1},e,r_{n+1})$, that is, $\tau(\mathbf u_{n,1,m})$ is an idempotent.

Finally, substituting the word $x_{n+1}x_{n+2}\cdots x_{n+k}$ for the variable $x_{n+1}$ in $\mathbf u_{n,1,m} \approx \mathbf u_{n,1,m}^2$ yields the identity $\mathbf u_{n,k,m} \approx \mathbf u_{n,k,m}^2$. Hence  the latter identity also holds in $\mathcal{B}_{G,I}$.
\end{proof}

For any $n,m\ge 1$, let
\begin{equation}
\label{eq:v1}
\mathbf v_{n,m}^{(1)}:=\mathbf u_{n,n,m}=x_1x_2\cdots x_{2n}\,\Bigl(x_nx_{n-1}\cdots x_1\cdot x_{n+1}x_{n+2}\cdots x_{2n}\Bigr)^{2m-1}.
\end{equation}
Observe that in $\mathbf v_{n,m}^{(1)}$, each variable occurs $2m$ times.

For any semigroup $(S,\cdot)$, let $E(S)$ stand for the set of all its idempotents. Recall that if the semigroup is inverse, then the set $E(S)$ is closed under multiplication.

\begin{lemma}
\label{lem:id-E(S)cupD}
Let $(S,\cdot)$ be an $(h,m)$-semigroup and \eqref{eq:series} its principal series. If $S_0=\{0\}$ and $S=E(S)\cup S_1$, then $(S,\cdot)$ satisfies the identity $\mathbf v_{n,m}^{(1)} \approx (\mathbf v_{n,m}^{(1)})^2$.
\end{lemma}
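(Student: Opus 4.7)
The plan is to reduce an arbitrary substitution $\tau\colon X_{2n}^{(1)}\to S$ to one landing inside the Brandt factor $S_1$, then invoke Lemma~\ref{lem:id-BI}. Write $A=\{i:\tau(x_i)\in S_1\}$ and $B=\{i:\tau(x_i)\in E(S)\setminus S_1\}$; since $S=E(S)\cup S_1$, these sets partition $\{1,\dots,2n\}$. If $A=\emptyset$, every $\tau(x_i)$ lies in the semilattice $E(S)$, whence $\tau(\mathbf{v}_{n,m}^{(1)})$ is a product of commuting idempotents, hence itself idempotent.

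So assume $A\ne\emptyset$. Since $S_1$ is an ideal, $\tau(\mathbf{v}_{n,m}^{(1)})\in S_1$, and I may assume this value is non-zero (else $0=0^2$). The first technical step is the following absorption property: for every $e\in E(S)$ and every non-zero $s\in S_1$, both $es$ and $se$ belong to $\{0,s\}$. This follows because $es\le_{\mathrm{nat}}s$ and $se\le_{\mathrm{nat}}s$ in any inverse semigroup, while in the Brandt semigroup $S_1$ the principal order ideal of any non-zero element consists only of that element and $0$.

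Next I would use this to delete every $B$-variable occurrence from the product. Write $\tau(\mathbf{v}_{n,m}^{(1)})=t_1t_2\cdots t_L$, fix a position $k$ with $i_k\in B$, and set $P=L\cdot t_k\cdot R$ with $L=t_1\cdots t_{k-1}$ and $R=t_{k+1}\cdots t_L$. Since $A\ne\emptyset$ and $i_k\notin A$, some $A$-position lies in $L$ or in $R$, forcing $L\in S_1$ or $R\in S_1$. The absorption property then gives $Lt_k\in\{0,L\}$ or $t_kR\in\{0,R\}$; the zero alternative would make $P=0$, contradicting our standing assumption. Hence $P=LR$, so $t_k$ may be deleted, and iterating yields $\tau(\mathbf{v}_{n,m}^{(1)})=\tau(\mathbf{w})$, where $\mathbf{w}$ is the sub-word of $\mathbf{v}_{n,m}^{(1)}$ consisting only of the $A$-variables.

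Finally, a direct inspection of $\mathbf{v}_{n,m}^{(1)}=\mathbf{u}_{n,n,m}$ shows that, after relabeling the surviving variables in their order of first occurrence, $\mathbf{w}$ is exactly $\mathbf{u}_{n',k',m}$ with $n'=|A\cap\{1,\dots,n\}|$ and $k'=|A\cap\{n+1,\dots,2n\}|$ (and $n'+k'\ge1$). Lemma~\ref{lem:id-BI} applied to $S_1$ gives $\tau(\mathbf{w})^2=\tau(\mathbf{w})$, whence $\tau(\mathbf{v}_{n,m}^{(1)})$ is idempotent, as required. The step I expect to be the main obstacle is the absorption-and-deletion argument of the third paragraph: it relies both on $S_1$ being an ideal and on the sparseness of the natural order in a Brandt semigroup, and one must scan both sides of each deleted factor to be sure that an $S_1$-occurrence is available to carry out the absorption without landing in $0$.
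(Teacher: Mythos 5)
Your proof is correct and follows essentially the same route as the paper: show that idempotents outside $S_1$ act as partial identities on non-zero elements of $S_1$ (the paper gets this by direct computation with the triples $(i,e,i)$, you get it from the sparseness of the natural partial order on a Brandt semigroup---both are fine), delete them from the product, and recognize the surviving subword of $\mathbf v_{n,m}^{(1)}$ as $\mathbf u_{n',k',m}$ so that Lemma~\ref{lem:id-BI} applies. The only cosmetic difference is that you retain idempotents lying in $S_1$ among the surviving factors while the paper deletes all idempotents; this is immaterial since Lemma~\ref{lem:id-BI} covers arbitrary substitutions into $S_1$.
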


\begin{proof}
Since $S_0=\{0\}$, the semigroup $(S_1,\cdot)$ is the Brandt semigroup $\mathcal{B}_{G,I}$ for some abelian group $\mathcal{G}=(G,\cdot)$ of exponent dividing $m$ and some non-empty set $I$. Clearly, each non-zero idempotent in $\mathcal{B}_{G,I}$ is of the form $(i,e,i)$ where $i\in I$ and $e$ is the identity element of the group $\mathcal{G}$. For any idempotent $f\in E(S)$, the product $f(i,e,i)$ is an idempotent in $\mathcal{B}_{G,I}$. If this product is not 0, then $f(i,e,i)=(j,e,j)$ for some $j\in I$. Multiplying the equality through by $(j,e,j)$ on the right yields $f(i,e,i)(j,e,j)=(j,e,j)$ whence $j=i$. We conclude that for all $f\in E(S)$ and $i\in I$, either $f(i,e,i)=(i,e,i)$ or $f(i,e,i)=0$. Now take an arbitrary element $d=(\ell,g,r)\in I\times G\times I$. Then $d=(\ell,e,\ell)d$ whence $fd=f(\ell,e,\ell)d$ for any idempotent $f\in E(S)$. We see that either $fd=d$ or $fd=0$, and dually, either $df=d$ or $df=0$.

To prove the lemma, we have to verify that $\tau(\mathbf v_{n,m}^{(1)})$ is an idempotent for an arbitrary substitution $\tau\colon X_{2n}^{(1)}\to S$. If $\tau(x_k)\in E(S)$ for all $k\in\{1,2,\dots,2n\}$, then $\tau(\mathbf v_{n,m}^{(1)})\in E(S)$ because $E(S)$ is closed under multiplication. Otherwise let $\{k_1,k_2,\dots,k_{p+q}\}$ with
\[
1\le k_1<k_2<\dots<k_p\le n< k_{p+1}<k_{p+2}<\dots<k_{p+q}\le 2n
\]
be the set of all indices $k$ such that $\tau(x_k)\notin E(S)$. (Here $p=0$ or $q=0$ is possible but $p+q>0$.) Since $S=E(S)\cup S_1$, we have $\tau(x_{k_1}),\tau(x_{k_2}),\dots,\tau(x_{k_{p+q}})\in S_1$. The argument in the preceding paragraph implies that either $\tau(\mathbf v_{n,m}^{(1)})=0$ or removing all $\tau(x_k)$ such that $\tau(x_k)\in E(S)$ does not change the value of $\tau(\mathbf v_{n,m}^{(1)})$. In the former case, the claim holds, and in the latter case, consider the substitution $\tau'\colon X_{p+q}^{(1)}\to S_1$ given by $\tau'(x_t):=\tau(x_{k_t})$ for all $s\in\{1,2,\dots,p+q\}$. Then $\tau(\mathbf v_{n,m}^{(1)})=\tau'(\mathbf u_{p,q,m})$. By Lemma~\ref{lem:id-BI} the Brandt semigroup $(S_1,\cdot)$ satisfies $\mathbf u_{p,q,m} \approx \mathbf u_{p,q,m}^2$ whence $\tau(\mathbf v_{n,m}^{(1)})$ is an idempotent also in this case.
\end{proof}

We proceed  with constructing identities holding in arbitrary $(h,m)$-semigroups. For any $i,h\ge1$, let
\[
X_i^{(h)}:=\{x_{i_1i_2\cdots i_h}\mid i_1,i_2,\dots,i_h\in\{1,2,\dots,i\}\}.
\]
For any $n,m,h\ge 1$, we introduce words $\mathbf v_{n,m}^{(h)}$ over $X_{2n}^{(h)}$ by induction on $h$. The word $\mathbf v_{n,m}^{(1)}$ over $X_{2n}^{(1)}$ has been defined in \eqref{eq:v1}. Then, assuming that $h>1$ and the word $\mathbf v_{n,m}^{(h-1)}$ over $X_{2n}^{(h-1)}$ has already been defined, we create $2n$ copies of this word over the alphabet $X_{2n}^{(h)}$ as follows. We start by taking for every $j\in\{1,2,\dots,2n\}$, the substitution
\[
\sigma_{2n,j}^{(h)}\colon X_{2n}^{(h-1)}\to X_{2n}^{(h)}
\]
that appends $j$ to the indices of its arguments, that is,
\[
\sigma_{2n,j}^{(h)}(x_{i_1i_2\dots i_{h-1}}):= x_{i_1i_2\dots i_{h-1}j} \text{ for all } i_1,i_2,\dots,i_{h-1}\in\{1,2,\dots,2n\}.
\]
Then we let $\mathbf v_{n,m,j}^{(h-1)}:=\sigma_{2n,j}^{(h)}(\mathbf v_{n,m}^{(h-1)})$ and define
\begin{equation}
\label{eq:vnh}
\mathbf v_{n,m}^{(h)}:=\mathbf v_{n,m,1}^{(h-1)}\cdots\mathbf v_{n,m,2n}^{(h-1)}\Bigl(\mathbf v_{n,m,n}^{(h-1)}\cdots\mathbf v_{n,m,1}^{(h-1)}\cdot \mathbf v_{n,m,n+1}^{(h-1)}\cdots\mathbf v_{n,m,2n}^{(h-1)}\Bigr)^{2m-1}.
\end{equation}

Comparing the definitions \eqref{eq:v1} and \eqref{eq:vnh}, one readily sees that the word $\mathbf v_{n,m}^{(h)}$ is nothing but the image of $\mathbf v_{n,m}^{(1)}$ under the substitution $x_j \mapsto \mathbf v_{n,m,j}^{(h-1)}$, $j\in\{1,2,\dots,2n\}$.

\begin{proposition}
\label{prop:id-fh}
Let $(S,\cdot)$ be an $(h,m)$-semigroup with principal series \eqref{eq:series} and $S_0=\{0\}$. For any $n\ge 2$, $(S,\cdot)$ satisfies the identity
\begin{equation}
\label{eq:IDh}
\mathbf v_{n,m}^{(h)} \approx (\mathbf v_{n,m}^{(h)})^2.
\end{equation}
\end{proposition}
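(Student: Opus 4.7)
The proof will be by induction on $h$. For the base case $h=1$, a $(1,m)$-semigroup with bottom $\{0\}$ coincides with its Brandt ideal $S_1$, so $S=E(S)\cup S_1$ holds trivially, and Lemma~\ref{lem:id-E(S)cupD} immediately gives $\mathbf{v}_{n,m}^{(1)}\approx(\mathbf{v}_{n,m}^{(1)})^2$.

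For the inductive step $h\ge 2$, the idea is to pass to the Rees quotient $T:=S/S_1$. Its principal series $\{0\}\subset S_2/S_1\subset\cdots\subset S_h/S_1=T$ has factors $(S_j/S_1)/(S_{j-1}/S_1)\cong S_j/S_{j-1}$, which remain Brandt semigroups over abelian groups of exponent dividing $m$, so $T$ is an $(h-1,m)$-semigroup with bottom $\{0\}$, and by the induction hypothesis $T$ satisfies $\mathbf{v}_{n,m}^{(h-1)}\approx(\mathbf{v}_{n,m}^{(h-1)})^2$. Fix any substitution $\tau\colon X_{2n}^{(h)}\to S$; using the observation made right after \eqref{eq:vnh} that $\mathbf{v}_{n,m}^{(h)}$ is the image of $\mathbf{v}_{n,m}^{(1)}$ under $x_j\mapsto\mathbf{v}_{n,m,j}^{(h-1)}$, I set $a_j:=\tau(\mathbf{v}_{n,m,j}^{(h-1)})=\tau_j(\mathbf{v}_{n,m}^{(h-1)})$, where $\tau_j(y):=\tau(\sigma_{2n,j}^{(h)}(y))$, and $\rho(x_j):=a_j$. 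Then $\tau(\mathbf{v}_{n,m}^{(h)})=\rho(\mathbf{v}_{n,m}^{(1)})$.

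Applying the induction hypothesis in $T$ to the induced substitution $\bar{\tau}_j\colon X_{2n}^{(h-1)}\to T$ (the composition of $\tau_j$ with the Rees projection) forces the class $\overline{a_j}=\bar{\tau}_j(\mathbf{v}_{n,m}^{(h-1)})$ to be idempotent in $T$. Unpacking $\overline{a_j}^2=\overline{a_j}$ in $T=S/S_1$: either $\overline{a_j}\ne 0$ in $T$ (equivalently $a_j\notin S_1$), in which case the equality forces $a_j=a_j^2$ in $S$, so $a_j\in E(S)$; or $\overline{a_j}=0$, i.e.\ $a_j\in S_1$. Either way, $a_j\in E(S)\cup S_1$. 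This is precisely the property on images of variables used in the proof of Lemma~\ref{lem:id-E(S)cupD}, since the hypothesis ``$S=E(S)\cup S_1$'' of that lemma enters its proof only to conclude $\tau(x_k)\in S_1$ whenever $\tau(x_k)\notin E(S)$. Consequently the argument of Lemma~\ref{lem:id-E(S)cupD} applies verbatim to the substitution $\rho$, yielding that $\tau(\mathbf{v}_{n,m}^{(h)})=\rho(\mathbf{v}_{n,m}^{(1)})$ is idempotent in $S$. I expect the verification of the dichotomy ``$a_j\in E(S)\cup S_1$'' to be the single point requiring attention, but once one passes to $T=S/S_1$ it reduces to unpacking equality in the Rees quotient.
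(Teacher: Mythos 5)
Your proof is correct and follows essentially the same route as the paper's: induct on $h$, pass to the Rees quotient $S/S_1$ to deduce that each $\tau(\mathbf v_{n,m,j}^{(h-1)})$ lies in $E(S)\cup S_1$, and conclude via the observation following \eqref{eq:vnh} together with Lemma~\ref{lem:id-E(S)cupD}. The only cosmetic differences are that the paper settles $h=1$ by citing Lemma~\ref{lem:id-BI} directly and invokes Lemma~\ref{lem:id-E(S)cupD} for the subsemigroup $(E(S)\cup S_1,\cdot)$, whereas you rerun that lemma's argument on the induced substitution.
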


\begin{proof}
We induct on $h$. If $h=1$, then $(S,\cdot)$ is a Brandt semigroup over an abelian group of exponent dividing $m$. Lemma~\ref{lem:id-BI} implies that $\mathbf v_{n,m}^{(1)} \approx (\mathbf v_{n,m}^{(1)})^2$ holds in $(S,\cdot)$ for all $n\ge2$.

Let $h>1$. The Rees quotient $(S/S_1,\cdot)$ is an $(h-1,m)$-semigroup whose principal series starts with the zero term. By the induction assumption, $(S/S_1,\cdot)$ satisfies the identity $\mathbf v_{n,m}^{(h-1)} \approx (\mathbf v_{n,m}^{(h-1)})^2$ for any $n\ge2$. This readily implies that any substitution $X_{2n}^{(h-1)}\to S$ sends the word $\mathbf v_{n,m}^{(h-1)}$ to either an idempotent in $S\setminus S_1$ or an element in $S_1$. By the construction, all words of the form $\mathbf v_{n,m,j}^{(h-1)}$ are obtained from the word $\mathbf v_{n,m}^{(h-1)}$ by renaming its variables. Therefore, for every substitution $\tau\colon X_{2n}^{(h)}\to S$, the elements $\tau(\mathbf v_{n,m,1}^{(h-1)})$, $\tau(\mathbf v_{n,m,2}^{(h-1)})$, \dots, $\tau(\mathbf v_{n,m,2n}^{(h-1)})$ lie in either $E(S)$ or $S_1$. The subsemigroup $(E(S)\cup S_1,\cdot)$ satisfies the identity $\mathbf v_{n,m}^{(1)} \approx (\mathbf v_{n,m}^{(1)})^2$ by Lemma~\ref{lem:id-E(S)cupD}. This, together with the observation made after the equality~\eqref{eq:vnh}, implies that $\tau(\mathbf v_{n,m}^{(h)})=\tau((\mathbf v_{n,m}^{(h)})^2)$. Since the substitution $\tau$ is arbitrary, the semigroup $(S,\cdot)$ satisfies \eqref{eq:IDh}.
\end{proof}

Now we remove the restriction $S_0=\{0\}$.

\begin{proposition}
\label{prop:id-fh+1}
The identity $\mathbf v_{n,m}^{(h+1)} \approx (\mathbf v_{n,m}^{(h+1)})^2$ with $n\ge 2$ holds in each $(h,m)$-semigroup.
\end{proposition}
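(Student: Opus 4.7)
The plan is to reduce the claim to Proposition~\ref{prop:id-fh} by formally adjoining a zero to the $(h,m)$-semigroup $(S,\cdot)$. Let $0$ be a fresh symbol, set $S^0:=S\cup\{0\}$, and extend the multiplication by $0\cdot x=x\cdot 0=0$ for all $x\in S^0$. Since $\mathbf v_{n,m}^{(h+1)}\approx(\mathbf v_{n,m}^{(h+1)})^2$ is an identity of $(S^0,\cdot)$ once we prove it, and $(S,\cdot)$ embeds as a subsemigroup of $(S^0,\cdot)$, it will follow for $(S,\cdot)$ as well.

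The key step is to recognize $(S^0,\cdot)$ as an $(h+1,m)$-semigroup whose principal series begins with the zero term, so that Proposition~\ref{prop:id-fh} applies with $h$ replaced by $h+1$. Starting from the given principal series $S_0\subset S_1\subset\cdots\subset S_h=S$, I would propose the chain
\[
\{0\}\subset S_0\cup\{0\}\subset S_1\cup\{0\}\subset\cdots\subset S_h\cup\{0\}=S^0
\]
of ideals of $(S^0,\cdot)$. The first Rees factor $(S_0\cup\{0\})/\{0\}$ is obtained by adjoining a zero to the abelian group $(S_0,\cdot)$ of exponent dividing $m$, so it is isomorphic to the Brandt semigroup $\mathcal{B}_{S_0,\{*\}}$ over a singleton index set, as required. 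Each subsequent factor $(S_j\cup\{0\})/(S_{j-1}\cup\{0\})$ is canonically isomorphic to $S_j/S_{j-1}$, which by assumption is a Brandt semigroup over an abelian group of exponent dividing $m$.

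The only genuine verification is that this chain has no refinements, that is, there is no ideal of $(S^0,\cdot)$ strictly between two consecutive terms. Ideals of $(S^0,\cdot)$ containing $0$ are in bijection (by removing $0$) with either the empty set or ideals of $(S,\cdot)$; this bijection carries the above chain to $\varnothing\subset S_0\subset S_1\subset\cdots\subset S_h$, which has no refinements precisely because the original series is principal. Thus $(S^0,\cdot)$ is an $(h+1,m)$-semigroup with minimal ideal $\{0\}$.

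With this structural claim in hand, Proposition~\ref{prop:id-fh} yields that $(S^0,\cdot)$ satisfies $\mathbf v_{n,m}^{(h+1)}\approx(\mathbf v_{n,m}^{(h+1)})^2$ for every $n\ge 2$, and restricting to the subsemigroup $(S,\cdot)$ completes the proof. I do not anticipate a serious obstacle: the construction of $S^0$ is routine, and the bookkeeping required to check that the augmented chain remains a principal series is essentially formal. The main conceptual point is simply that adjoining an external zero preserves the $(h,m)$-structure while shifting the index by one and enforcing the hypothesis $S_0=\{0\}$ needed for the previous proposition.
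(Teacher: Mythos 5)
Your proof is correct, but it takes a genuinely different route from the paper's. The paper passes to the Rees quotient $(S/S_0,\cdot)$, applies Proposition~\ref{prop:id-fh} there to conclude that each block $\tau(\mathbf v_{n,m,j}^{(h)})$ lands in $E(S)\cup S_0$, and then finishes with an explicit computation in the bottom group: idempotents act as the identity on $S_0$, so the outer word collapses to $\overline{\tau}(\mathbf v_{n,m}^{(1)})=e$ because an abelian group of exponent dividing $m$ satisfies $\mathbf v_{n,m}^{(1)}\approx 1$. You instead adjoin an external zero, recognize $S_0^{\,0}\cong\mathcal{B}_{S_0,\{*\}}$ as a Brandt factor over a singleton index set, and thereby exhibit $(S^0,\cdot)$ as an $(h+1,m)$-semigroup with zero bottom, so that Proposition~\ref{prop:id-fh} applies verbatim with $h+1$ in place of $h$ and the identity restricts to the subsemigroup $(S,\cdot)$. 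Your route is shorter and avoids redoing the group-layer computation entirely, at the price of the principal-series bookkeeping, which you handle correctly via the bijection between ideals of $S^0$ and ideals of $S$ (plus the empty set); the paper's route keeps the role of the hypothesis on $(S_0,\cdot)$ visible at the top layer, whereas in yours it is consumed inside Lemma~\ref{lem:id-BI} through the one-by-one Brandt factor. Both arguments establish exactly the stated identity $\mathbf v_{n,m}^{(h+1)}\approx(\mathbf v_{n,m}^{(h+1)})^2$.
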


\begin{proof}
Let $(S,\cdot)$ be an $(h,m)$-semigroup with principal series \eqref{eq:series}. Consider the Rees quotient $(S/S_0,\cdot)$. This is an $(h,m)$-semigroup whose principal series starts with the zero term. By Proposition \ref{prop:id-fh}, the semigroup $(S/S_0,\cdot)$ satisfies \eqref{eq:IDh}. Since $\mathbf v_{n,m,j}^{(h)}=\sigma_{2n,j}(\mathbf v_{n,m}^{(h)})$ for all $j=1,2,\dots,2n$, this implies that for every substitution $\tau\colon X_{2n}^{(h+1)}\to S$, the element $\tau(\mathbf v_{n,m,j}^{(h)})$ represents an idempotent of $(S/S_0,\cdot)$. Therefore, $\tau(\mathbf v_{n,m,j}^{(h)})$ lies in either $E(S)\setminus S_0$ or $S_0$. Substituting $h+1$ for $h$ in \eqref{eq:vnh}, we see that
\[
\mathbf v_{n,m}^{(h+1)}=\mathbf v_{n,m,1}^{(h)}\cdots\mathbf v_{n,m,2n}^{(h)}\Bigl(\mathbf v_{n,m,n}^{(h)}\cdots\mathbf v_{n,m,1}^{(h)}\cdot \mathbf v_{n,m,n+1}^{(h)}\cdots\mathbf v_{n,m,2n}^{(h)}\Bigr)^{2m-1}.
\]
Recall that by the definition of an $(h,m)$-semigroup, $(S_0,\cdot)$ is an abelian group of exponent dividing $m$. Denote by $e$ the identity element of the group. For each $f\in E(S)$, the product $fe$ is an idempotent in $S_0$ whence $fe=e$ since a group has no idempotent except its identity element. Consequently, for every $g\in S_0$, we have $fg=feg=eg=g$. Dually, $gf=g$ for all $g\in S_0$, $f\in E(S)$. Now consider the substitution $\overline{\tau}\colon X_{2n}^{(1)}\to S_0$ defined by
\[
\overline{\tau}(x_j):=\begin{cases}
\tau(\mathbf v_{n,m,j}^{(h)})&\text{if }\tau(\mathbf v_{n,m,j}^{(h)})\text{ belongs to } S_0,\\
e&\text{otherwise},
\end{cases}\quad\text{for each $j\in\{1,2,\dots,2n\}$.}
\]
As we know that $\tau(\mathbf v_{n,m,j}^{(h)})\in E(S)\cup S_0$ for all $j\in\{1,2,\dots,2n\}$ and $fg=gf=g$ for all $g\in S_0$, $f\in E(S)$, we conclude that $\tau(\mathbf v_{n,m}^{(h+1)})=\overline{\tau}(\mathbf v_{n,m}^{(1)})$. Since $(S_0,\cdot)$ is an abelian group of exponent dividing $m$, it satisfies the identity $\mathbf v_{n,m}^{(1)} \approx 1$. Hence $\tau(\mathbf v_{n,m}^{(h+1)})=\overline{\tau}(\mathbf v_{n,m}^{(1)})=e$. Since the substitution $\tau$ is arbitrary, $(S,\cdot)$ satisfies the identity $\mathbf v_{n,m}^{(h+1)} \approx (\mathbf v_{n,m}^{(h+1)})^2$.
\end{proof}

We conclude this section with proving that the rook monoids $\mathcal{R}_2$ and $\mathcal{R}_3$ satisfy certain identities of the form~\eqref{eq:IDh}.
\begin{proposition}
\label{prop:rook2and3}
\emph{(1)} The rook monoid $\mathcal{R}_2$ satisfies the identity $\mathbf v_{n,2}^{(2)} \approx (\mathbf v_{n,2}^{(2)})^2$ for any $n\ge 2$.

\emph{(2)} The rook monoid $\mathcal{R}_3$ satisfies the identity $\mathbf v_{n,6}^{(4)} \approx (\mathbf v_{n,6}^{(4)})^2$ for any $n\ge 2$.
\end{proposition}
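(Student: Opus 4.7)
The plan for both parts is the same: reduce to a smaller subsemigroup of the rook monoid via the decomposition
\[
\mathbf v_{n,m}^{(h+1)}=\mathbf v_{n,m}^{(h)}\bigl[x_{i_1\cdots i_h}\mapsto \tilde W_{i_1\cdots i_h}\bigr],
\]
in which each $\tilde W_{i_1\cdots i_h}$ is a renamed copy of $\mathbf v_{n,m}^{(1)}$; up to a harmless relabeling of variables this identity follows by iterating the observation made right after equation~\eqref{eq:vnh}. Concretely, I will locate a subsemigroup $U$ of the rook monoid such that (a)~every substitution into the rook monoid sends $\mathbf v_{n,m}^{(1)}$ into $U$, and (b)~$U$ satisfies the shorter identity $\mathbf v_{n,m}^{(h)}\approx(\mathbf v_{n,m}^{(h)})^2$. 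The decomposition then immediately yields $\mathbf v_{n,m}^{(h+1)}\approx(\mathbf v_{n,m}^{(h+1)})^2$ in the rook monoid.

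For part~(1), take $T:=\{I_2\}\cup S_1(R_2)$, where $I_2$ is the $2\times2$ identity matrix and $S_1(R_2)$ is the ideal of rank-$\le 1$ matrices in $R_2$. A direct check shows $T$ is a subsemigroup with principal series $\{0\}\subset S_1(R_2)\subset T$ whose two Rees factors are Brandt semigroups over the trivial group. Thus $T$ is a $(2,2)$-semigroup with $S_0=\{0\}$ and $T=E(T)\cup S_1(R_2)$, so Lemma~\ref{lem:id-E(S)cupD} yields $\mathbf v_{n,2}^{(1)}\approx(\mathbf v_{n,2}^{(1)})^2$ in $T$. For property~(a), $\mathcal{R}_2/S_1(R_2)\cong\mathcal{B}_{\mathbb{Z}_2,\{*\}}$ is a Brandt semigroup over an abelian group of exponent~$2$, so Lemma~\ref{lem:id-BI} forces the image of $\mathbf v_{n,2}^{(1)}$ in this quotient to be an idempotent of $\mathcal{B}_{\mathbb{Z}_2,\{*\}}$, that is, an element of $\{e,0\}$, whose preimage in $R_2$ is exactly $T$.

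For part~(2), the main obstacle is that $\mathcal{R}_3$ itself is not an $(h,m)$-semigroup, because its top $\mathcal{D}$-class has non-abelian maximal subgroup~$S_3$. The rescue is to pass to $U:=\{I_3,P_{(123)},P_{(132)}\}\cup S_2(R_3)$, where $I_3,P_{(123)},P_{(132)}$ are the three even permutation matrices and $S_2(R_3)$ is the ideal of rank-$\le 2$ matrices. One checks that $U$ is a subsemigroup and that its principal series $\{0\}\subset S_1(R_3)\subset S_2(R_3)\subset U$ has Rees factors that are Brandt semigroups over the trivial group, $\mathbb{Z}_2$, and $\mathbb{Z}_3$ respectively. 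Since these groups are abelian with exponents dividing~$6$, the subsemigroup $U$ is a $(3,6)$-semigroup with $S_0=\{0\}$, and Proposition~\ref{prop:id-fh} yields $\mathbf v_{n,6}^{(3)}\approx(\mathbf v_{n,6}^{(3)})^2$ in $U$.

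The remaining key step is to verify that every substitution $\tau\colon X_{2n}^{(1)}\to R_3$ sends $\mathbf v_{n,6}^{(1)}$ into $U$. Projecting through $\mathcal{R}_3\to \mathcal{R}_3/S_2(R_3)\cong S_3^0$: if some $\tau(x_i)$ has rank at most~$2$, the whole product projects to $0$ and $\tau(\mathbf v_{n,6}^{(1)})\in S_2(R_3)\subseteq U$; otherwise every $\tau(x_i)$ is a permutation matrix, and since each variable of $\mathbf v_{n,6}^{(1)}$ occurs exactly $2m=12$ times, the signum of the product in $S_3$ equals $\prod_i(\operatorname{sgn}\tau(x_i))^{12}=1$, so the product lies in the alternating group $A_3$ and $\tau(\mathbf v_{n,6}^{(1)})\in\{I_3,P_{(123)},P_{(132)}\}\subseteq U$. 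This parity argument, which exploits the evenness of the exponent $2m$ of $\mathbf v_{n,6}^{(1)}$ to confine the image into the abelian subgroup $A_3$ of $S_3$, is where the non-abelianness of $S_3$ is overcome, and is the main technical point of the proposition.
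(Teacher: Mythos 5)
Your argument is correct. For part~(2) it coincides with the paper's proof in every essential respect: the subsemigroup $U$ you build is exactly the paper's $\mathcal{R}'_3$ (the rook monoid with the three transposition matrices removed), the verification that it is a $(3,6)$-semigroup and hence satisfies $\mathbf v_{n,6}^{(3)}\approx(\mathbf v_{n,6}^{(3)})^2$ via Proposition~\ref{prop:id-fh} is the same, and the determinant/parity argument confining $\rho(\mathbf v_{n,6}^{(1)})$ to $I_2\cup A_3$ is precisely how the paper overcomes the non-abelianness of $\mathrm{Sym}_3$; the decomposition of $\mathbf v_{n,6}^{(4)}$ as $\mathbf v_{n,6}^{(3)}$ evaluated at renamed copies of $\mathbf v_{n,6}^{(1)}$ is the paper's substitution $\zeta$. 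The only genuine divergence is in part~(1), where you run the same reduction machinery (quotient $\mathcal{R}_2/I_1\cong\mathcal{B}_{\mathbb{Z}_2,\{*\}}$, Lemma~\ref{lem:id-BI} to land in $T=B_2^1$, Lemma~\ref{lem:id-E(S)cupD} on $T$, then the decomposition of $\mathbf v_{n,2}^{(2)}$). This works, but it is more roundabout than needed: since $\mathrm{Sym}_1$ and $\mathrm{Sym}_2$ are already abelian of exponent dividing~$2$, the whole of $\mathcal{R}_2$ is a $(2,2)$-semigroup, and Proposition~\ref{prop:id-fh} applied with $h=2$ gives $\mathbf v_{n,2}^{(2)}\approx(\mathbf v_{n,2}^{(2)})^2$ in one step, with no subsemigroup or quotient argument at all. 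In short, your proof of~(2) is the paper's proof, and your proof of~(1) is a correct but avoidable detour.
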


\begin{proof}
The rook monoid $\mathcal{R}_t$ has a principal series
\[
\{0\}=I_0\subset I_1\subset \dots \subset I_t=R_t
\]
such that for each $k=1,\dots,s$, the Rees factor $(I_k/I_{k-1},\cdot)$ is a Brandt semigroup over the symmetric group $\mathrm{Sym}_k$, that is, the group of all permutations of $k$ symbols; see, e.g., \cite[Section 2]{Munn57}.
Since $\mathrm{Sym}_1$ is trivial and $\mathrm{Sym}_2$ consists of two elements, we see that $\mathcal{R}_2$ is a (2,2)-semi\-group. Hence Proposition~\ref{prop:id-fh} applies, yielding that $\mathbf v_{n,2}^{(2)} \approx (\mathbf v_{n,2}^{(2)})^2$ holds in $\mathcal{R}_2$ for all $n\ge 2$.

The group $\mathrm{Sym}_3$ is non-abelian, and therefore,  Proposition~\ref{prop:id-fh} does not apply to $\mathcal{R}_3$. However, it does apply to the subsemigroup $\mathcal{R}'_3:=(R'_3,\cdot,{}^{-1})$ where the set $R'_3$ is obtained from $R_3$ by removing the three transposition matrices
\begin{equation}\label{eq:transpositions}
\begin{pmatrix}
0&1&0\\
1&0&0\\
0&0&1
\end{pmatrix},\quad
\begin{pmatrix}
0&0&1\\
0&1&0\\
1&0&0
\end{pmatrix},\quad
\begin{pmatrix}
1&0&0\\
0&0&1\\
0&1&0
\end{pmatrix}.
\end{equation}
The subsemigroup $\mathcal{R}'_3$ has the principal series $\{0\}=I_0\subset I_1\subset I_2 \subset I'_3=R'_3$. All subgroups of $\mathcal{R}'_3$ have one, two, or three elements, and so, they all are abelian of exponent dividing 6. By Proposition~\ref{prop:id-fh} $\mathcal{R}'_3$ satisfies the identity $\mathbf v_{n,6}^{(3)} \approx (\mathbf v_{n,6}^{(3)})^2$.

It is easy to see that the word $\mathbf v_{n,6}^{(4)}$ is the image of the word $\mathbf v_{n,6}^{(3)}$ under a substitution $\zeta$ that sends every variable from $X^{(3)}_{2n}$ to a word obtained from $\mathbf v_{n,6}^{(1)}$ by renaming its variables. Indeed, in terms of the substitutions $\sigma_{2n,j}^{(h)}$ used in the definition of the words $\mathbf v_{n,m}^{(h)}$, one can express $\zeta$ as follows:
\[
\zeta(x_{i_1i_2i_3}):=\sigma_{2n,i_3}^{(4)}(\sigma_{2n,i_2}^{(3)}(\sigma_{2n,i_1}^{(2)}(\mathbf v_{n,6}^{(1)})))\ \text{ for all } i_1,i_2,i_3\in\{1,2,\dots,2n\}.
\]
Simply put, $\zeta(x_{i_1i_2i_3})$ is obtained by appending $i_1i_2i_3$ to the indices of all variables of $\mathbf v_{n,6}^{(1)}$.

Take an arbitrary substitution $\tau\colon X_{2n}^{(4)}\to R_3$. For any fixed $i_1,i_2,i_3\in\{1,2,\dots,2n\}$,  consider the substitution $\tau_{i_1i_2i_3}\colon X_{2n}^{(1)}\to R_3$ induced by $\tau$ via the rule $\tau_{i_1i_2i_3}(x_j):=\tau(x_{ji_1i_2i_3})$ for all $j\in \{1,2,\dots,2n\}$. Then we have
\[
\tau(\zeta(x_{i_1i_2i_3}))=\tau_{i_1i_2i_3}(\mathbf v_{n,6}^{(1)})\ \text{ for all } i_1,i_2,i_3\in\{1,2,\dots,2n\}.
\]
Thus, evaluating $\tau$ at the word $\mathbf v_{n,6}^{(4)}=\zeta(\mathbf v_{n,6}^{(3)})$ produces the same result as substitution of the elements $\tau_{i_1i_2i_3}(\mathbf v_{n,6}^{(1)})$ for the variables $x_{i_1i_2i_3}$ into the word $\mathbf v_{n,6}^{(3)}$. If $\tau_{i_1i_2i_3}(\mathbf v_{n,6}^{(1)})\in R'_3$ for all substitutions $\tau_{i_1i_2i_3}$, we can use the fact that $\mathcal{R}'_3$ satisfies the identity $\mathbf v_{n,6}^{(3)} \approx (\mathbf v_{n,6}^{(3)})^2$ as registered above and conclude that $\tau(\mathbf v_{n,6}^{(4)})=\tau((\mathbf v_{n,6}^{(4)})^2)$.

We see that it remains to show that $\rho(\mathbf v_{n,6}^{(1)})\in R'_3$ for every substitution $\rho\colon X_{2n}^{(1)}\to R_3$. If $\rho(x_j)\in I_2$ for some $j$, then $\rho(\mathbf v_{n,6}^{(1)})\in I_2\subset R'_3$ since $I_2$ is an ideal in $\mathcal{R}_3$. Assume that $\rho(x_j)\notin I_2$ for all $j\in\{1,2,\dots,2n\}$. The set $R_3\setminus I_2$ consists of six permutation zero-one $3\times 3$-matrices of which the three transposition matrices in~\eqref{eq:transpositions} have determinant $-1$ while the three other matrices have determinant 1 and belong to $R'_3$. Since each variable occurs in $\mathbf v_{n,6}^{(1)}$ an even number of times, $\rho(\mathbf v_{n,6}^{(1)})$ is a product of matrices with determinant $\pm1$ that has an even number of factors with determinant $-1$. Hence, $\rho(\mathbf v_{n,6}^{(1)})$ a permutation matrix with determinant 1, and therefore, $\rho(\mathbf v_{n,6}^{(1)})\in R'_3$.
\end{proof}

\section{The semigroups $S_n^{(h)}$}
\label{sec:kadourek}

We make use of a family of inverse semigroups constructed by Ka\softd{}ourek in~\cite[Section~2]{Kad03}. For the reader's convenience we reproduce Ka\softd{}ourek's construction here.

First, for all $n,h\ge 1$, define terms $\mathbf w_n^{(h)}$ of the signature $(\cdot,{}^{-1})$ over the alphabet $X_n^{(h)}=\{x_{i_1i_2\cdots i_h}\mid i_1,i_2,\dots,i_h\in\{1,2,\dots,n\}\}$ by induction on $h$. Put
\[
\mathbf w_n^{(1)}:=x_1x_2\cdots x_n\,x_1^{-1}x_2^{-1}\cdots x_n^{-1}.
\]
Then, assuming that, for any $h>1$, the unary term $\mathbf w_n^{(h-1)}$ over the alphabet $X_n^{(h-1)}$ has already been defined, we create $n$ copies of this term over the alphabet $X_n^{(h)}$ as follows. For every $j\in\{1,2,\dots,n\}$, we put
\[
\mathbf w_{n,j}^{(h-1)}:=\sigma_{n,j}^{(h)}(\mathbf w_n^{(h-1)}),
\]
where the substitution $\sigma_{n,j}^{(h)}\colon X_{n}^{(h-1)}\to X_{n}^{(h)}$ appends $j$ to the indices of its arguments, i.e.,
\[
\sigma_{n,j}^{(h)}(x_{i_1i_2\dots i_{h-1}}):= x_{i_1i_2\dots i_{h-1}j} \text{ for all } i_1,i_2,\dots,i_{h-1}\in\{1,2,\dots,n\}.
\]
Then we put
\[
\mathbf w_n^{(h)}:=\mathbf w_{n,1}^{(h-1)}\mathbf w_{n,2}^{(h-1)}\cdots \mathbf w_{n,n}^{(h-1)}(\mathbf w_{n,1}^{(h-1)})^{-1}(\mathbf w_{n,2}^{(h-1)})^{-1}\cdots (\mathbf w_{n,n}^{(h-1)})^{-1}.
\]
Define the length of unary terms over $X_n^{(h)}$ by letting
\[
|x_{i_1i_2\cdots i_h}|:=1\ \text{ for $x_{i_1i_2\cdots i_h}\in X_n^{(h)}$ and }\ |\mathbf{ww'}|:=|\mathbf w|+|\mathbf w'|,\  |(\mathbf w)^{-1}|:=|\mathbf w|\ \text{for all terms $\mathbf w,\mathbf w'$}.
\]
Then one has $|\mathbf w_n^{(h)}|=2^hn^h$. Also observe that for every $i_1,i_2,\dots,i_h\in\{1,2,\dots,n\}$, the term $\mathbf w_n^{(h)}$ has exactly $2^{h-1}$ occurrences of $x_{i_1i_2\dots i_h}$ and exactly $2^{h-1}$ occurrences of $x_{i_1i_2\dots i_h}^{-1}$.

Now for any $n\ge2$ and $h\ge1$, let $(S_n^{(h)},\cdot,{}^{-1})$ stand for the inverse semigroup of partial one-to-one transformations on the set $\{0,1,\dots,2^hn^h\}$ generated by $n^h$ transformations $\chi_{i_1i_2\dots i_h}$ with arbitrary indices $i_1,i_2,\dots, i_h\in\{1,2,\dots,n\}$ defined as follows:
\begin{itemize}
\item $\chi_{i_1i_2\dots i_h}(q-1)=q$ if and only if the element on the $q$th position in $\mathbf w_n^{(h)}$ from the left is $x_{i_1i_2\dots i_h}$;
\item $\chi_{i_1i_2\dots i_h}(q)=q-1$ if and only if the element on the $q$th position in $\mathbf w_n^{(h)}$ from the left is $x_{i_1i_2\dots i_h}^{-1}$.
\end{itemize}
Clearly, $(S_n^{(h)},\cdot,{}^{-1})$ is finite (as a collection of transformations on a finite set) and has 0 (the nowhere defined transformation).

For an illustration, consider the case $n=2$, $h=2$. Then
\[
\mathbf w_2^{(2)}=\underbrace{x_{11}x_{21}x_{11}^{-1}x_{21}^{-1}}_{\mathbf w_{2,1}^{(1)}}\ \underbrace{x_{12}x_{22}x_{12}^{-1}x_{22}^{-1}}_{\mathbf w_{2,2}^{(1)}}\ \underbrace{x_{21}x_{11}x_{21}^{-1}x_{11}^{-1}}_{(\mathbf w_{2,1}^{(1)})^{-1}}\ \underbrace{x_{22}x_{12}x_{22}^{-1}x_{12}^{-1}}_{(\mathbf w_{2,2}^{(1)})^{-1}}.
\]
The action of the generators of the inverse semigroup $(S_2^{(2)},\cdot,{}^{-1})$ are shown in Figure~\ref{fig:kadourek}.
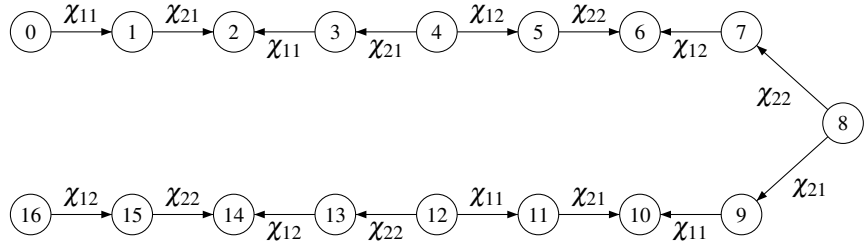
\begin{figure}[h]
\begin{center}
\unitlength 1.35mm
\begin{picture}(80,25)(0,-2.5)
\gasset{Nw=4,Nh=4,Nmr=2}
\node(u0)(0,18){\footnotesize 0}
\node(u1)(10,18){\footnotesize 1}
\node(u2)(20,18){\footnotesize 2}
\node(u3)(30,18){\footnotesize 3}
\node(u4)(40,18){\footnotesize 4}
\node(u5)(50,18){\footnotesize 5}
\node(u6)(60,18){\footnotesize 6}
\node(u7)(70,18){\footnotesize 7}
\node(u8)(80,9){\footnotesize 8}
\node(u9)(70,0){\footnotesize 9}
\node(u10)(60,0){\footnotesize 10}
\node(u11)(50,0){\footnotesize 11}
\node(u12)(40,0){\footnotesize 12}
\node(u13)(30,0){\footnotesize 13}
\node(u14)(20,0){\footnotesize 14}
\node(u15)(10,0){\footnotesize 15}
\node(u16)(0,0){\footnotesize 16}
\drawedge(u0,u1){$\chi_{11}$}
\drawedge(u1,u2){$\chi_{21}$}
\drawedge(u3,u2){$\chi_{11}$}
\drawedge(u4,u3){$\chi_{21}$}
\drawedge(u4,u5){$\chi_{12}$}
\drawedge(u5,u6){$\chi_{22}$}
\drawedge(u7,u6){$\chi_{12}$}
\drawedge(u8,u7){$\chi_{22}$}
\drawedge(u8,u9){$\chi_{21}$}
\drawedge(u9,u10){$\chi_{11}$}
\drawedge(u11,u10){$\chi_{21}$}
\drawedge(u12,u11){$\chi_{11}$}
\drawedge(u12,u13){$\chi_{22}$}
\drawedge(u13,u14){$\chi_{12}$}
\drawedge(u15,u14){$\chi_{22}$}
\drawedge(u16,u15){$\chi_{12}$}
\end{picture}
\caption{The generators of the inverse semigroup $(S_2^{(2)},\cdot,{}^{-1})$}\label{fig:kadourek}
\end{center}
\end{figure}

\noindent Observe that the partial transformation that one gets from the term $\mathbf w_2^{(2)}$ by evaluating each variable $x_{ij}$ at the transformation $\chi_{ij}^{-1}$ maps 16 to 0 and is undefined elsewhere.

\medskip

We need two properties of the inverse semigroups $(S_n^{(h)},\cdot,{}^{-1})$. The first one was deduced in \cite{Kad03} from an effective membership test for the inverse semigroup variety generated by the 6-element Brandt monoid $(B_2^1,\cdot,{}^{-1})$ that had been devised in~\cite{Kad91}.

\begin{proposition}[\!{\mdseries\cite[Corollary 3.2]{Kad03}}]
\label{prop:kadourek1}
Let $n\ge 2$ and $h\ge 1$. Any inverse subsemigroup of $(S_n^{(h)},\cdot,{}^{-1})$ generated by less than $n$ elements satisfies all identities of the $6$-element Brandt monoid $(B_2^1,\cdot,{}^{-1})$.
\end{proposition}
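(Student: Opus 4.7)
The plan is to combine a pigeonhole argument at the outermost coordinate of the indices with the recursive construction of $\mathbf w_n^{(h)}$ and to invoke the effective membership test for $\mathbf{V}(B_2^1)$ established in~\cite{Kad91}. The first step is to recall this test: it characterizes inverse semigroups in $\mathbf{V}(B_2^1)$ via a structural condition on combinatorial inverse semigroups that is hereditary under taking homomorphic images, subalgebras, and subdirect products.

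Fix any $n-1$ generators $\chi_{\alpha_1},\dots,\chi_{\alpha_{n-1}}$ of $(S_n^{(h)},\cdot,{}^{-1})$ with $\alpha_k=(i_{k,1},\dots,i_{k,h})$. Since only $n-1$ values of the last coordinate $i_{k,h}$ appear, the pigeonhole principle gives some $j^*\in\{1,\dots,n\}$ that is absent. By the way the generators are defined through the positions of $x_{i_1\dots i_h}^{\pm1}$ in $\mathbf w_n^{(h)}$, none of the chosen $\chi_{\alpha_k}$ acts on any edge of the path of Figure~\ref{fig:kadourek} belonging to the blocks $\mathbf w_{n,j^*}^{(h-1)}$ or $(\mathbf w_{n,j^*}^{(h-1)})^{-1}$. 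The action of the generators therefore decomposes along these two ``gap intervals'' into actions on the remaining $2n-2$ blocks, each of which — after applying the appropriate index-renaming substitution $\sigma_{n,j}^{(h)}$ and, for the inverse blocks, reversing orientation — is an instance of an action of $(n-1)$ generators on $(S_n^{(h-1)},\cdot,{}^{-1})$.

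I would then induct on $h$. For the base case $h=1$, any inverse subsemigroup generated by $n-1$ of the transformations $\chi_1,\dots,\chi_n$ acts on the path $\{0,1,\dots,2n\}$ with at least two broken edges, and a direct verification shows such a subsemigroup embeds into a finite direct power of $B_2^1$. For the inductive step, the block decomposition above exhibits the $(n-1)$-generated inverse subsemigroup of $(S_n^{(h)},\cdot,{}^{-1})$ as a subdirect product of inverse subsemigroups of copies of $(S_n^{(h-1)},\cdot,{}^{-1})$, each generated by at most $n-1$ elements; the induction hypothesis places each factor in $\mathbf{V}(B_2^1)$, and closure of varieties under subdirect products concludes the proof.

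The main obstacle is the careful bookkeeping required to justify that the action on each individual block is honestly the action of an $(n-1)$-generated subsemigroup of a genuine copy of $S_n^{(h-1)}$, rather than of something larger. In particular, one must verify that the restrictions of the chosen generators to a block are compatible with the substitutions $\sigma_{n,j}^{(h)}$ and that taking inverses corresponds correctly to reversing orientation in the ``inverse'' blocks. Beyond this, one must confirm that the specific form of Kadourek's test from~\cite{Kad91} is what delivers the conclusion: once the induction is set up, varietal closure suffices, but honest inspection of $B_2^1$ and its identities is needed to verify the base case.
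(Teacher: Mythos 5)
Your argument collapses at its very first step: the proposition concerns inverse subsemigroups generated by fewer than $n$ \emph{arbitrary} elements of $(S_n^{(h)},\cdot,{}^{-1})$, not by fewer than $n$ of the distinguished generators $\chi_{i_1i_2\dots i_h}$. The distinction is essential for the way the result is used later (in the proof of Theorem~\ref{thm:main} one takes the subsemigroup generated by the values $\tau(x_1),\dots,\tau(x_\ell)$ of an arbitrary substitution, and each such value is a product of possibly all $n^h$ of the $\chi$'s and their inverses). For such generating sets no value of any index coordinate need be omitted, so the pigeonhole step and the ensuing block decomposition never get off the ground. Even in the special case you do treat, the decomposition is not as clean as you claim: deleting the edges lying in the blocks $\mathbf w_{n,j^*}^{(h-1)}$ and $(\mathbf w_{n,j^*}^{(h-1)})^{-1}$ leaves connected intervals that in general span several consecutive blocks with distinct last indices, because the edge joining the last node of the block for index $j$ to the first node of the block for index $j+1$ is labelled by a generator with last index $j+1$ and survives; hence the restrictions are not actions on single copies of $S_n^{(h-1)}$. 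Finally, the base case $h=1$ is itself far from a ``direct verification'': deciding that a given finite combinatorial inverse semigroup lies in the variety generated by $(B_2^1,\cdot,{}^{-1})$ is exactly the hard content here.

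For comparison: the paper offers no proof of this statement at all. It is imported verbatim as \cite[Corollary~3.2]{Kad03}, where it is deduced from the effective membership test for the inverse semigroup variety generated by the $6$-element Brandt monoid developed in \cite{Kad91}. That test is what handles subsemigroups generated by arbitrary elements, which is precisely the case your combinatorial analysis of the generating path cannot reach. A self-contained argument would require reproving a sufficient portion of Ka\softd{}ourek's criterion, a substantial undertaking that does not reduce to a pigeonhole-plus-subdirect-product scheme.
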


Since $(B_2^1,\cdot,{}^{-1})$ satisfies the identity $x^2\approx x^3$, applying Proposition~\ref{prop:kadourek1} to monogenic inverse subsemigroups of $(S_n^{(h)},\cdot,{}^{-1})$ yields the following fact:

\begin{corollary}
\label{cor:x2x3}
The identity $x^2\approx x^3$ holds in $(S_n^{(h)},\cdot,{}^{-1})$ for all $n\ge 2$ and $h\ge 1$.
\end{corollary}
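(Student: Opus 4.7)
The plan is to apply Proposition~\ref{prop:kadourek1} to the monogenic inverse subsemigroup generated by an arbitrary element, exactly as the paragraph preceding the corollary hints. Since $n\ge 2$, any single element generates a subsemigroup on fewer than $n$ generators, so Proposition~\ref{prop:kadourek1} applies.

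First I would verify that the $6$-element Brandt monoid $(B_2^1,\cdot,{}^{-1})$ satisfies $x^2\approx x^3$. This is an elementary finite check on the six matrices listed in~\eqref{eq:b21}: the zero matrix and the identity matrix already satisfy $x=x^2$; the two diagonal matrix units are idempotents; and the two off-diagonal matrix units square to the zero matrix, hence their cubes also equal zero. In every case $x^2=x^3$.

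Then, for an arbitrary $s\in S_n^{(h)}$, let $\langle s\rangle$ denote the inverse subsemigroup of $(S_n^{(h)},\cdot,{}^{-1})$ generated by $s$. Because $1<n$, Proposition~\ref{prop:kadourek1} guarantees that $\langle s\rangle$ satisfies every identity of $(B_2^1,\cdot,{}^{-1})$, and in particular the identity $x^2\approx x^3$. Substituting $s$ into this identity yields $s^2=s^3$ in $\langle s\rangle$, and hence in $S_n^{(h)}$. Since $s$ was arbitrary, the identity $x^2\approx x^3$ holds throughout $(S_n^{(h)},\cdot,{}^{-1})$.

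There is no real obstacle here: the entire content has been pre-packaged in Proposition~\ref{prop:kadourek1}, and the proof reduces to the trivial verification in $(B_2^1,\cdot,{}^{-1})$ together with the observation that a monogenic subsemigroup uses only one generator.
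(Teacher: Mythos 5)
Your proof is correct and takes essentially the same route as the paper, which derives the corollary in one line by noting that $(B_2^1,\cdot,{}^{-1})$ satisfies $x^2\approx x^3$ and applying Proposition~\ref{prop:kadourek1} to monogenic inverse subsemigroups of $(S_n^{(h)},\cdot,{}^{-1})$. Your explicit check of the six matrices is just a harmless elaboration of a fact the paper treats as known.
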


The second property of the inverse semigroup $(S_n^{(h)},\cdot,{}^{-1})$ we need deals with its multiplicative reduct $(S_n^{(h)},\cdot)$ and appears to be new.

\begin{proposition}
\label{prop:Sn(h)}
Let $n\ge 2$ and $h,m\ge 1$. The semigroup $(S_n^{(h)},\cdot)$ violates the identity \eqref{eq:IDh}.
\end{proposition}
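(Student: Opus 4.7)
The plan is to exhibit a substitution $\tau:X_{2n}^{(h)}\to S_n^{(h)}$, assigning to each variable a generator $\chi_{\vec j}$ or its inverse, under which $\tau(\mathbf v_{n,m}^{(h)})$ is the partial transformation on $\{0,1,\dots,2^hn^h\}$ that sends $0$ to $2^hn^h$. Since idempotents of $S_n^{(h)}$ are partial identities and $0\ne 2^hn^h$, such an element cannot equal its own square, so \eqref{eq:IDh} fails in $(S_n^{(h)},\cdot)$.

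The key observation is that evaluating $\mathbf w_n^{(h)}$ at $x_{\vec k}\mapsto\chi_{\vec k}$---call the resulting element $\varepsilon(\mathbf w_n^{(h)})\in S_n^{(h)}$---traces the path $0\to1\to\cdots\to 2^hn^h$, because at each position $q$ of $\mathbf w_n^{(h)}$ the letter (or its inverse) is defined precisely so that the corresponding generator sends $q-1$ to $q$. Writing $\mathbf v_{n,m}^{(h)}=V_h\cdot U_h^{2m-1}$ with $V_h=\mathbf v_{n,m,1}^{(h-1)}\cdots\mathbf v_{n,m,2n}^{(h-1)}$ and $U_h=\mathbf v_{n,m,n}^{(h-1)}\cdots\mathbf v_{n,m,1}^{(h-1)}\mathbf v_{n,m,n+1}^{(h-1)}\cdots\mathbf v_{n,m,2n}^{(h-1)}$, I would choose $\tau$ so that, block by block, $\tau(\mathbf v_{n,m,j}^{(h-1)})=\varepsilon(\mathbf w_{n,j}^{(h-1)})$ for $j\le n$ and $\tau(\mathbf v_{n,m,j}^{(h-1)})=\varepsilon((\mathbf w_{n,j-n}^{(h-1)})^{-1})$ for $j>n$. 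These identifications give $\tau(V_h)=\varepsilon(\mathbf w_n^{(h)})$, hence $\tau(V_h)(0)=2^hn^h$. Applied at $2^hn^h$, the element $\tau(U_h)$ traces the first $n$ blocks of $\mathbf w_n^{(h)}$ backwards---using the secondary action pair of each $\varepsilon(\mathbf w_{n,j}^{(h-1)})$ that arises because the variables of $\mathbf w_{n,j}^{(h-1)}$ also occur in the inverse block $(\mathbf w_{n,j}^{(h-1)})^{-1}$ of $\mathbf w_n^{(h)}$---down to the midpoint $2^{h-1}n^h$, and then runs the inverse blocks back up to $2^hn^h$. So $\tau(U_h)$ fixes $2^hn^h$, and consequently $\tau(\mathbf v_{n,m}^{(h)})(0)=\tau(V_h)(0)=2^hn^h$, as desired.

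The main technical obstacle is giving an explicit definition of $\tau$ at the variable level that realizes the block-wise equalities above. I would build $\tau$ by recursion on $h$ mirroring the recursive definition of $\mathbf v_{n,m}^{(h)}$: for $x_{i_1\cdots i_h}$ with $i_h\le n$, define $\tau(x_{i_1\cdots i_h})$ from the $(h-1)$-level recipe applied to $x_{i_1\cdots i_{h-1}}$ by appending $i_h$ to the last index; for $i_h>n$, define $\tau(x_{i_1\cdots i_h})$ to realize the inverse of the previous element, with last index $i_h-n$ and with an appropriate index reflection that at the base level ($h=1$) swaps $i\leftrightarrow n+1-i$ within each of the halves $\{1,\dots,n\}$ and $\{n+1,\dots,2n\}$ together with a flip of the inverse mark. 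The base case is a direct computation: with $\tau(x_i)=\chi_i$ for $i\le n$ and $\tau(x_i)=\chi_{i-n}^{-1}$ for $i>n$, the initial block $x_1\cdots x_{2n}$ of $\mathbf v_{n,m}^{(1)}$ becomes exactly $\varepsilon(\mathbf w_n^{(1)})$ and sends $0$ to $2n$, while each repeated factor $x_n\cdots x_1x_{n+1}\cdots x_{2n}$ becomes $\chi_n\cdots\chi_1\chi_1^{-1}\cdots\chi_n^{-1}$, a partial transformation that traces $2n\to n\to 2n$ and hence fixes $2n$. Verifying that the proposed index-reflection produces the inverse of the previous evaluation is the most delicate bookkeeping step in the inductive construction.
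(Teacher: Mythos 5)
Your overall strategy is the paper's own: build a substitution sending each variable of $\mathbf v_{n,m}^{(h)}$ to a generator $\chi_{i_1\cdots i_h}$ or its inverse, so that $\tau(\mathbf v_{n,m}^{(h)})$ equals the evaluation of $\mathbf w_n^{(h)}$, a partial bijection that moves an endpoint of the path $0,1,\dots,2^hn^h$ and hence cannot be idempotent. (Whether one evaluates variables at $\chi_{i_1\cdots i_h}$ to get $0\mapsto 2^hn^h$, as you do, or at $\chi_{i_1\cdots i_h}^{-1}$ to get $2^hn^h\mapsto 0$, as the paper does, depends only on the composition convention; either works.) Your base case $h=1$ is correct and complete, and the reduction to the block-wise equalities $\tau(\mathbf v_{n,m,j}^{(h-1)})=\varepsilon(\mathbf w_{n,j}^{(h-1)})$ for $j\le n$ and $\tau(\mathbf v_{n,m,j}^{(h-1)})=\varepsilon((\mathbf w_{n,j-n}^{(h-1)})^{-1})$ for $j>n$ is sound: granting them, with $\mathbf p=\mathbf w_{n,1}^{(h-1)}\cdots\mathbf w_{n,n}^{(h-1)}$ and $\mathbf q=(\mathbf w_{n,1}^{(h-1)})^{-1}\cdots(\mathbf w_{n,n}^{(h-1)})^{-1}$ one gets $\tau(\mathbf v_{n,m}^{(h)})=\varepsilon\bigl(\mathbf p\mathbf q(\mathbf q^{-1}\mathbf q)^{2m-1}\bigr)=\varepsilon(\mathbf p\mathbf q)=\varepsilon(\mathbf w_n^{(h)})$ by $ss^{-1}s=s$, which is cleaner than your orbit-tracing of $\tau(U_h)$ (though that tracing is also valid).

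The gap is exactly where you place it, and the one concrete detail you offer for closing it would fail. To realize $\varepsilon((\mathbf w_{n,j-n}^{(h-1)})^{-1})$ from a copy of $\mathbf v_{n,m}^{(h-1)}$ you need a second, inverse-realizing substitution at level $h-1$, and the induction must carry the two substitutions in parallel (the paper's $\varphi_n^{(h-1)}$ and $\psi_n^{(h-1)}$), since at level $h$ the indices $i_h\le n$ invoke the first and the indices $i_h>n$ invoke the second. Your proposed reflection --- swap $i\leftrightarrow n+1-i$ within each half \emph{together with a flip of the inverse mark} --- is not the right map: already at $h=1$ it sends $x_1x_2\cdots x_n$ to $x_n^{-1}x_{n-1}^{-1}\cdots x_1^{-1}$, whereas $(\mathbf w_n^{(1)})^{-1}$ begins with $x_nx_{n-1}\cdots x_1$. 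The correct choice is either the half-wise reflection with \emph{no} flip of the inversion marks ($x_i\mapsto x_{n+1-i}$ for $i\le n$ and $x_i\mapsto x_{2n+1-i}^{-1}$ for $i>n$), or, equivalently, the global reflection $i\mapsto 2n+1-i$ composed with a flip. With that correction and the two-substitution induction, your argument goes through and coincides with the paper's proof, up to arguing semantically inside $S_n^{(h)}$ rather than via identities valid in all inverse semigroups.
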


\begin{proof}
For any alphabet $X$, let $\overline{X}$ stand for the union of $X$ with the set $\{x^{-1}\mid x\in X\}$ of formal inverses of variables in $X$. We construct substitutions $\varphi_n^{(h)}$ and $\psi_n^{(h)}$ from $X_{2n}^{(h)}$ onto $\overline{X}_n^{(h)}$ such that $\varphi_n^{(h)}(\mathbf v_{n,m}^{(h)})\approx \mathbf w_n^{(h)}$ and $\psi_n^{(h)}(\mathbf v_{n,m}^{(h)})\approx (\mathbf w_n^{(h)})^{-1}$ in every inverse semigroup.

We induct on $h$. If $h=1$, we let
\[
\varphi_n^{(1)}(x_i):=
\begin{cases}
x_i&\text{if }1\le i\le n,\\
x_{i-n}^{-1}&\text{if }n+1\le i\le 2n,
\end{cases}
\quad\text{and}\quad\psi_n^{(1)}(x_i):=
\begin{cases}
x_{n+1-i}&\text{if }1\le i\le n,\\
x_{2n+1-i}^{-1}&\text{if }n+1\le i\le 2n.
\end{cases}
\]
Then
\[
\varphi_n^{(1)}(\mathbf v_{n,m}^{(1)})=x_1x_2\cdots x_n\,x_1^{-1}x_2^{-1}\cdots x_n^{-1}\,(x_nx_{n-1}\cdots x_1\,x_1^{-1}x_2^{-1}\cdots x_n^{-1})^{2m-1}.
\]
Since inverse semigroups satisfy $(x_nx_{n-1}\cdots x_1)^{-1}\approx x_1^{-1}x_2^{-1}\cdots x_n^{-1}$ and $x^{-1}xx^{-1}\approx x^{-1}$, we conclude that $\varphi_n^{(1)}(\mathbf v_{n,m}^{(1)})\approx x_1x_2\cdots x_n\,x_1^{-1}x_2^{-1}\cdots x_n^{-1}=\mathbf w_n^{(1)}$ in every inverse semigroup. Similarly, we get that in every inverse semigroup,
\begin{align*}
\psi_n^{(1)}(\mathbf v_{n,m}^{(1)})&=x_nx_{n-1}\cdots x_1\,x_n^{-1}x_{n-1}^{-1}\cdots x_1^{-1}\,(x_1x_2\cdots x_n\,x_n^{-1}x_{n-1}^{-1}\cdots x_1^{-1})^{2m-1}\\
&\approx x_nx_{n-1}\cdots x_1\,x_n^{-1}x_{n-1}^{-1}\cdots x_1^{-1}=(\mathbf w_n^{(h)})^{-1}.
\end{align*}

Let $h>1$. By the induction assumption, there are substitutions $\varphi_n^{(h-1)}$ and $\psi_n^{(h-1)}$ from $X_{2n}^{(h-1)}$ onto $\overline{X}_n^{(h-1)}$ such that all inverse semigroups satisfy $\varphi_n^{(h-1)}(\mathbf v_{n,m}^{(h-1)})\approx \mathbf w_n^{(h-1)}$ and $\psi_n^{(h-1)}(\mathbf v_{n,m}^{(h-1)})\approx (\mathbf w_n^{(h-1)})^{-1}$. For each $(h-1)$-tuple $(i_1,i_2,\dots,i_{h-1})$ with $i_1,i_2,\dots,i_{h-1}\in\{1,2,\dots,2n\}$, we can write
\begin{align}
\varphi_n^{(h-1)}(x_{i_1i_2\dots i_{h-1}})&=x^\delta_{\alpha_1\alpha_2\dots\alpha_{h-1}},\label{eq:phi}\\
\psi_n^{(h-1)}(x_{i_1i_2\dots i_{h-1}})&=x^\varepsilon_{\beta_1\beta_2\dots\beta_{h-1}},\label{eq:psi}
\end{align}
where $\delta,\varepsilon\in\{1,-1\}$ and $\alpha_1,\alpha_2,\dots,\alpha_{h-1},\beta_1,\beta_2,\dots,\beta_{h-1}\in\{1,2,\dots,n\}$ are uniquely determined by $(i_1,i_2,\dots,i_{h-1})$. Now we define the substitutions $\varphi_n^{(h)}$ and $\psi_n^{(h)}$ from $X_{2n}^{(h)}$ onto $\overline{X}_n^{(h)}$ as follows: for all $i_1,i_2,\dots,i_{h-1},i_h\in\{1,2,\dots,2n\}$,
\begin{align}
\varphi_n^{(h)}(x_{i_1i_2\dots i_{h-1}i_h})&:=\begin{cases}
x^\delta_{\alpha_1\alpha_2\dots\alpha_{h-1}i_h}&\text{if }1\le i_h\le n,\\
x^\varepsilon_{\beta_1\beta_2\dots\beta_{h-1}(i_h-n)}&\text{if }n+1\le i_h\le 2n,
\end{cases}\label{eq:phinew}\\
\psi_n^{(h)}(x_{i_1i_2\dots i_{h-1}i_h})&:=\begin{cases}
x^\delta_{\alpha_1\alpha_2\dots\alpha_{h-1}(n+1-i_h)}&\text{if }1\le i_h\le n,\\
x^\varepsilon_{\beta_1\beta_2\dots\beta_{h-1}(2n+1-i_h)}&\text{if }n+1\le i_h\le 2n,
\end{cases}\label{eq:psinew}
\end{align}
where $\delta,\varepsilon\in\{1,-1\}$ and $\alpha_1,\alpha_2,\dots,\alpha_{h-1},\beta_1,\beta_2,\dots,\beta_{h-1}\in\{1,2,\dots,n\}$ are determined by \eqref{eq:phi} and \eqref{eq:psi}. Recall that the words $\mathbf v_{n,i}^{(h-1)}$ and the terms $\mathbf w_{n,i}^{(h-1)}$ are obtained by appending $i$ to the indices of all variables occurring in respectively $\mathbf v_n^{(h-1)}$ and $\mathbf w_n^{(h-1)}$. Therefore, \eqref{eq:phinew} and \eqref{eq:psinew} ensure that for each $i\in\{1,2,\dots,n\}$, the identities  $\varphi_n^{(h-1)}(\mathbf v_{n,m}^{(h-1)})\approx \mathbf w_n^{(h-1)}$ and $\psi_n^{(h-1)}(\mathbf v_{n,m}^{(h-1)})\approx (\mathbf w_n^{(h-1)})^{-1}$ imply the identities
\[
\varphi_{n}^{(h)}(\mathbf v_{n,m,i}^{(h-1)}) \approx  \mathbf w_{n,i}^{(h-1)}\ \text{ and} \ \psi_{n}^{(h)}(\mathbf v_{n,m,n+i}^{(h-1)}) \approx  (\mathbf w_{n,i}^{(h-1)})^{-1}.
\]
Using these, we see that in every inverse semigroup,
\begin{align*}
\varphi_n^{(h)}(\mathbf v_{n,m}^{(h)})&{}\approx \Bigl(\prod_{i=1}^n\mathbf w_{n,i}^{(h-1)}\Bigr)\Bigl(\prod_{i=1}^n(\mathbf w_{n,i}^{(h-1)})^{-1}\Bigr)\Bigl(\Bigl(\prod_{i=1}^n\mathbf w_{n,n-i+1}^{(h-1)}\Bigr)\Bigl(\prod_{i=1}^n(\mathbf w_{n,i}^{(h-1)})^{-1}\Bigr)\Bigr)^{2m-1}\\
&{}\approx \Bigl(\prod_{i=1}^n\mathbf w_{n,i}^{(h-1)}\Bigr)\Bigl(\prod_{i=1}^n(\mathbf w_{n,i}^{(h-1)})^{-1}\Bigr)\Bigl(\prod_{i=1}^n\mathbf w_{n,n-i+1}^{(h-1)}\Bigr)\Bigl(\prod_{i=1}^n(\mathbf w_{n,i}^{(h-1)})^{-1}\Bigr)\approx \mathbf w_n^{(h)}.
\end{align*}
Similarly, $\psi_n^{(h)}(\mathbf v_{n,m}^{(h)})\approx (\mathbf w_n^{(h)})^{-1}$ in every inverse semigroup.

It follows that if an inverse semigroup $(S,\cdot,{}^{-1})$ satisfies the identity $\mathbf v_{n,m}^{(h)}\approx (\mathbf v_{n,m}^{(h)})^2$ then it also satisfies the identity $\mathbf w_n^{(h)}\approx (\mathbf w_n^{(h)})^2$. However, it is easy to see (and is mentioned in~\cite[proof of Theorem~5.1]{Kad03}) that the inverse semigroup $(S_n^{(h)},\cdot,{}^{-1})$ does not satisfy the identity $\mathbf w_n^{(h)}\approx (\mathbf w_n^{(h)})^2$. Indeed, if $\zeta\colon X_n^{(h)}\to S_n^{(h)}$ is defined by $\zeta(x_{i_1i_2\dots i_{h}}):=\chi_{i_1i_2\dots i_{h}}^{-1}$, the transformation $\zeta(\mathbf w_n^{(h)})$ maps $2^hn^h$ to 0 while the transformation $\zeta((\mathbf w_n^{(h)})^2)$ is nowhere defined. Therefore, the identity $\mathbf v_{n,m}^{(h)}\approx (\mathbf v_{n,m}^{(h)})^2$ fails in $(S_n^{(h)},\cdot)$.
\end{proof}

\section{Proofs of main results}
\label{sec:proofs}

We need an observation from \cite{Vol21}. Here it is stated in the notation of the present note.

\begin{lemma}[\!{\mdseries\cite[Lemma~2.1]{Vol21}}]
\label{lem:as-ais}
If an inverse semigroup $(S,\cdot,{}^{-1})$ satisfies for some $p$, the identity $x^p\approx x^{p+1}$, then $(S,\le_{\mathrm{nat}})$ is an $\inf$-semilattice and $x+_{\mathrm{nat}} y=(xy^{-1})^px$ for all $x,y\in S$.
\end{lemma}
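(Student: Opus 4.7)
The plan is to show that for all $x,y\in S$ the element $(xy^{-1})^p x$ is the $\le_{\mathrm{nat}}$-infimum of $\{x,y\}$; the semilattice claim and the explicit formula then follow simultaneously. The starting observation is that $x^p\approx x^{p+1}$ forces $x^p$ to be idempotent: iterating the identity gives $x^{p+k}=x^p$ for every $k\ge 0$, so in particular $(x^p)^2=x^{2p}=x^p$. Applied to $xy^{-1}$, this says $e:=(xy^{-1})^p\in E(S)$.

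I would then verify the two inequalities separately. That $(xy^{-1})^p x\le_{\mathrm{nat}} x$ is immediate, since any element of the form $e\cdot x$ with $e\in E(S)$ sits below $x$ in the natural order. For $(xy^{-1})^p x\le_{\mathrm{nat}} y$ I would use the equivalent formulation $a\le_{\mathrm{nat}} b\iff aa^{-1}=ab^{-1}$ and compute both sides. Using commutativity of idempotents in an inverse semigroup,
\[
((xy^{-1})^p x)((xy^{-1})^p x)^{-1}=(xy^{-1})^p\,xx^{-1}\,(xy^{-1})^p=xx^{-1}(xy^{-1})^p,
\]
and the factor $xx^{-1}$ is absorbed via $xx^{-1}x=x$, giving $xx^{-1}(xy^{-1})^p=(xy^{-1})^p$. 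On the other side,
\[
((xy^{-1})^p x)\,y^{-1}=(xy^{-1})^{p+1}=(xy^{-1})^p,
\]
where the second equality is $x^p\approx x^{p+1}$ specialized to $xy^{-1}$. The two sides agree, so $(xy^{-1})^p x\le_{\mathrm{nat}} y$.

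For the greatest-lower-bound property, suppose $z\le_{\mathrm{nat}} x$ and $z\le_{\mathrm{nat}} y$. From $z\le_{\mathrm{nat}} y$ one has $zy^{-1}=zz^{-1}$ and from $z\le_{\mathrm{nat}} x$ one has $zz^{-1}x=z$. A short induction on $k$ then yields $zz^{-1}(xy^{-1})^k=zz^{-1}$, the inductive step being $zz^{-1}\cdot xy^{-1}=(zz^{-1}x)y^{-1}=zy^{-1}=zz^{-1}$. Taking $k=p$ and multiplying by $x$ on the right gives $zz^{-1}(xy^{-1})^p x=zz^{-1}x=z$, which is precisely $z\le_{\mathrm{nat}}(xy^{-1})^p x$. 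Hence $(xy^{-1})^p x=\inf\{x,y\}$, $(S,\le_{\mathrm{nat}})$ is an inf-semilattice, and the formula $x+_{\mathrm{nat}} y=(xy^{-1})^p x$ is established.

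The main obstacle, such as it is, is bookkeeping: one has to apply $x^p\approx x^{p+1}$ to the element $xy^{-1}$ at the right moment and to invoke commutativity of idempotents at the right moment, so that $xx^{-1}$ can be absorbed into $(xy^{-1})^p$ and the two characterizations of $\le_{\mathrm{nat}}$ can be matched up. Once these manipulations are lined up, the argument collapses to a handful of identities that hold in every inverse semigroup.
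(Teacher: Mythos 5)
Your argument is correct. Note that the paper itself gives no proof of this lemma --- it is imported verbatim by citation from \cite[Lemma~2.1]{Vol21} --- so there is nothing internal to compare against; your direct verification (idempotency of $(xy^{-1})^p$ from $x^p\approx x^{p+1}$, the two lower-bound inequalities, and the induction showing any common lower bound $z$ satisfies $zz^{-1}(xy^{-1})^px=z$) is exactly the kind of computation one would expect the cited source to contain, and every step checks out, including the characterization $a\le_{\mathrm{nat}}b\iff aa^{-1}=ab^{-1}$, whose reverse implication follows since $aa^{-1}=ab^{-1}$ gives $a=ab^{-1}a$, hence $ab^{-1}=ba^{-1}$ (an idempotent is its own inverse) and so $a=ba^{-1}a\le_{\mathrm{nat}}b$.
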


Lemma~\ref{lem:as-ais} and the features of Ka\softd{}ourek's construction from Sect.~\ref{sec:kadourek} lead to the following.

\begin{theorem}
\label{thm:main}
Let $\mathcal{S}=(S,+,\cdot)$ be an \ais\ whose multiplicative reduct satisfies the identities \eqref{eq:IDh} for all $n\ge2$ and some $m,h\ge 1$. If the \ais\ $(B_2^1,+_{\mathrm{nat}},\cdot)$ satisfies all identities of $\mathcal{S}$, then $\mathcal{S}$ admits no finite identity basis.
\end{theorem}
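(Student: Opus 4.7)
The plan follows the semantic scheme sketched after Theorem~\ref{thm:rook}: for each $n\ge 2$ I will exhibit an \ais\ $\mathcal{S}_n$ that satisfies every identity of $\mathcal{S}$ in fewer than $n$ variables yet refutes the identity \eqref{eq:IDh}. Since \eqref{eq:IDh} belongs to the identities of $\mathcal{S}$ by hypothesis, no finite set of identities of $\mathcal{S}$ (which would involve only boundedly many variables) can imply all identities of $\mathcal{S}$; hence $\mathcal{S}$ will be nonfinitely based.

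Take $\mathcal{S}_n:=(S_n^{(h)},+_{\mathrm{nat}},\cdot)$, where $(S_n^{(h)},\cdot,{}^{-1})$ is Ka\softd{}ourek's inverse semigroup from Sect.~\ref{sec:kadourek}. Corollary~\ref{cor:x2x3} gives $x^2\approx x^3$ in $(S_n^{(h)},\cdot,{}^{-1})$, so Lemma~\ref{lem:as-ais} makes $(S_n^{(h)},\le_{\mathrm{nat}})$ into an inf-semilattice with $x+_{\mathrm{nat}}y=(xy^{-1})^2x$, turning $\mathcal{S}_n$ into a legitimate \ais. By Proposition~\ref{prop:Sn(h)} the multiplicative reduct $(S_n^{(h)},\cdot)$ refutes \eqref{eq:IDh}, and since this identity is purely multiplicative, $\mathcal{S}_n$ refutes it too.

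The key step is to verify that every identity $\mathbf u\approx\mathbf v$ of $\mathcal{S}$ with fewer than $n$ variables holds in $\mathcal{S}_n$. Given any substitution $\xi\colon Y\to S_n^{(h)}$ with $|Y|<n$, the formula $x+_{\mathrm{nat}}y=(xy^{-1})^2x$ lets us rewrite $\mathbf u$ and $\mathbf v$ as unary terms in $\{\cdot,{}^{-1}\}$, so $\xi(\mathbf u)$ and $\xi(\mathbf v)$ are computed inside the inverse subsemigroup $T$ of $(S_n^{(h)},\cdot,{}^{-1})$ generated by $\xi(Y)$. Since $|\xi(Y)|<n$, Proposition~\ref{prop:kadourek1} yields that $T$ satisfies every identity of $(B_2^1,\cdot,{}^{-1})$; in particular $T$ satisfies $x^2\approx x^3$, so $(T,+_{\mathrm{nat}},\cdot)$ is itself an \ais\ with the same defining formula for $+_{\mathrm{nat}}$, and consequently satisfies every identity of $(B_2^1,+_{\mathrm{nat}},\cdot)$. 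By hypothesis the latter \ais\ satisfies $\mathbf u\approx\mathbf v$, so this identity holds in $(T,+_{\mathrm{nat}},\cdot)$, whence $\xi(\mathbf u)=\xi(\mathbf v)$ as required.

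The point needing care is the bookkeeping between the signatures $\{+,\cdot\}$ and $\{\cdot,{}^{-1}\}$: one must observe that the addition induced on $T$ by $\mathcal{S}_n$ coincides with the intrinsic $+_{\mathrm{nat}}$ of $T$ furnished by Lemma~\ref{lem:as-ais}, and that each \ais\ identity of $(B_2^1,+_{\mathrm{nat}},\cdot)$ unfolds through $x+_{\mathrm{nat}}y=(xy^{-1})^2x$ into an inverse-semigroup identity of $(B_2^1,\cdot,{}^{-1})$ that then transfers to $T$ via Proposition~\ref{prop:kadourek1}. Both observations are routine but are the heart of why Ka\softd{}ourek's construction can be reused in the \ais\ setting.
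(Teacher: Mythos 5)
Your proposal is correct and follows essentially the same route as the paper's proof: the same witnesses $(S_n^{(h)},+_{\mathrm{nat}},\cdot)$, the same use of Corollary~\ref{cor:x2x3} and Lemma~\ref{lem:as-ais} to define the addition, the same rewriting of $+$ as $(xy^{-1})^2x$ to transfer identities through Proposition~\ref{prop:kadourek1}, and the same appeal to Proposition~\ref{prop:Sn(h)} to refute \eqref{eq:IDh}. The only differences are cosmetic (a direct formulation instead of a proof by contradiction, and packaging the transfer step as ``$(T,+_{\mathrm{nat}},\cdot)$ satisfies all identities of $(B_2^1,+_{\mathrm{nat}},\cdot)$'' rather than handling one identity of the putative basis at a time), and you correctly flag the one point that needs care, namely that the induced and intrinsic additions on $T$ coincide.
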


\begin{proof}
Arguing by contradiction, assume that for some $k$ the \ais\ $\mathcal{S}$ has an identity basis $\Sigma$ such that each identity in $\Sigma$ involves less than $k$ variables. Consider the inverse semigroup $(S_k^{(h)},\cdot,{}^{-1})$ from Sect.~\ref{sec:kadourek} where $h$ is the parameter of the identities \eqref{eq:IDh} that hold in  the multiplicative reduct $(S,\cdot)$ of $\mathcal{S}$. By Corollary~\ref{cor:x2x3} $(S_k^{(h)},\cdot,{}^{-1})$ satisfies $x^2\approx x^3$, and therefore, Lemma~\ref{lem:as-ais} implies that $(S_k^{(h)},+_{\mathrm{nat}},\cdot)$ is an ai-semi\-ring. We claim that this \ais\ satisfies an arbitrary identity $\mathbf u\approx\mathbf v$ in $\Sigma$.

By Lemma~\ref{lem:as-ais} $x+_{\mathrm{nat}} y$ expresses as $(xy^{-1})^2x$ in $(S_k^{(h)},+_{\mathrm{nat}},\cdot)$. Therefore one can rewrite the identity $\mathbf u\approx \mathbf v$ into an identity $\mathbf u'\approx \mathbf v'$ in which $\mathbf u'$ and $\mathbf v'$ are $(\cdot,{}^{-1})$-terms with the same variables as $\mathbf u$ and $\mathbf v$. Let $x_1,x_2,\dots,x_\ell$ be all variables that occur in $\mathbf u'$ or $\mathbf v'$. Consider an arbitrary substitution $\tau\colon\{x_1,x_2,\dots,x_\ell\}\to S_k^{(h)}$ and let $(T,\cdot,{}^{-1})$ be the inverse subsemigroup of $(S_k^{(h)},\cdot,{}^{-1})$ generated by the elements $\tau(x_1),\tau(x_2),\dots,\tau(x_\ell)$. Since $\ell<k$, Proposition~\ref{prop:kadourek1} implies that $(T,\cdot,{}^{-1})$ satisfies all identities of the 6-element Brandt monoid  $(B_2^1,\cdot,{}^{-1})$.

Since by the condition of the theorem, the \ais\ $(B_2^1,+_{\mathrm{nat}},\cdot)$ satisfies all identities of $\mathcal{S}$, the identity $\mathbf u\approx\mathbf v$ holds in $(B_2^1,+_{\mathrm{nat}},\cdot)$. This implies that the rewritten identity $\mathbf u'\approx \mathbf v'$ holds in $(B_2^1,\cdot,{}^{-1})$. (Here we utilize the fact that $(B_2^1,\cdot,{}^{-1})$ satisfies $x^2\approx x^3$, and therefore, $x+_{\mathrm{nat}} y$ expresses in $(B_2^1,\cdot,{}^{-1})$ as the same $(\cdot,{}^{-1})$-term $(xy^{-1})^2x$.) Hence the identity $\mathbf u'\approx \mathbf v'$ holds also in the inverse semigroup $(T,\cdot,{}^{-1})$, and so $\mathbf u'$ and $\mathbf v'$ take the same value under every substitution of elements of $T$ for the variables $x_1,\dots,x_\ell$. In particular, $\tau(\mathbf u)=\tau(\mathbf u')=\tau(\mathbf v')=\tau(\mathbf v)$. Since the substitution $\tau$ is arbitrary, this proves our claim that the identity $\mathbf u\approx \mathbf v$ holds in the \ais\ $(S_k^{(h)},+_{\mathrm{nat}},\cdot)$. Since $\mathbf u\approx \mathbf v$ is an arbitrary identity from the identity basis $\Sigma$ of $\mathcal{S}$, we see that $(S_k^{(h)},+_{\mathrm{nat}},\cdot)$ satisfies all identities of $\mathcal{S}$. Forgetting the addition, we conclude that the multiplicative reduct $(S_k^{(h)},\cdot)$  of $(S_k^{(h)},+_{\mathrm{nat}},\cdot)$ satisfies all identities of the multiplicative reduct $(S,\cdot)$ of $\mathcal{S}$. By the condition of the theorem, $(S,\cdot)$ satisfies the identity $\mathbf v_{k,m}^{(h)} \approx (\mathbf v_{k,m}^{(h)})^2$ for some  $m\ge 1$, but by Proposition \ref{prop:Sn(h)} this identity fails in $(S_k^{(h)},\cdot)$, a contradiction.
\end{proof}

\begin{remark}
\label{rm:zero}
Since the \ais{}\ $(S_k^{(h)},+_{\mathrm{nat}},\cdot)$ and $(B_2^1,+_{\mathrm{nat}},\cdot)$ used in the above proof are semirings with 0, the same proof works fine for \ais{}s with 0 treated as algebras of type (2,2,0). The same conclusion applies to all corollaries of Theorem~\ref{thm:main} stated below.
\end{remark}

\begin{remark}
\label{rm:natural}
The multiplicative reduct of the \ais\ $\mathcal{S}$ in Theorem~\ref{thm:main} need not be an inverse semigroup, and in a follow up paper, we will give some applications of Theorem~\ref{thm:main} to \ais{}s whose multiplicative reducts are \emph{block-groups} in the sense of~\cite{Pin95}. Moreover, even the reduct is inverse, $\mathcal{S}$ need not be a naturally semilattice-ordered inverse semigroup. Observe that an inverse semigroup may admit more than one addition making it an \ais. As a concrete example, borrowed from \cite{Vol21}, consider the ai-semiring $(\Sigma_7,+,\cdot)$ introduced in~\cite{Dolinka07}. Here the set $\Sigma_7$ consists of the following Boolean $2\times 2$-matrices:
\[
\begin{pmatrix} 1&1\\1&1\end{pmatrix},\ \begin{pmatrix} 1&0\\0&1\end{pmatrix},\ \begin{pmatrix} 1&0\\1&1\end{pmatrix},\ \begin{pmatrix} 1&1\\0&1\end{pmatrix},\  \begin{pmatrix} 0&1\\1&1\end{pmatrix},\ \begin{pmatrix} 1&1\\1&0\end{pmatrix},\ \begin{pmatrix} 0&0\\0&0\end{pmatrix},
\]
and the operations + and $\cdot$ are the usual addition and multiplication of Boolean matrices. The multiplicative reduct $(\Sigma_7,\cdot)$ is easily seen to be a combinatorial inverse semigroup. Therefore, one can define the ``natural'' addition $+_{\mathrm{nat}}$ on $\Sigma_7$ via \eqref{eq:inf}, but the addition is quite different from the addition of Boolean matrices. Moreover, the \ais{}s $(\Sigma_7,+,\cdot)$ and $(\Sigma_7,+_{\mathrm{nat}},\cdot)$ even fail to be equationally equivalent as is witnessed, for instance, by the identity $(xy+yx)^2\approx x^2+y^2$ that holds in $(\Sigma_7,+_{\mathrm{nat}},\cdot)$ but not in $(\Sigma_7,+,\cdot)$.

In general, the question of how the equational properties of two \ais{}s may relate when the \ais{}s have the same multiplicative reduct appears to be non-trivial and worth exploration. In the above example, both $(\Sigma_7,+,\cdot)$ and $(\Sigma_7,+_{\mathrm{nat}},\cdot)$ are NFB. We do not know if there exists a finite inverse semigroup $(S,\cdot,{}^{-1})$ that admits two additions $+_1$ and $+_2$ such that both $(S,+_{1},\cdot)$ and $(S,+_{2},\cdot)$ are \ais{}s, but only one of them is NFB.
\end{remark}

It is easy to deduce Theorem~\ref{thm:comb-nfb} from Theorem~\ref{thm:main} but in fact, our proof technique gives a more general result that we state first.

\begin{theorem}
\label{thm:comb-nfb-gen}
Let $\mathcal{S}=(S,+,\cdot)$ be a finite \ais\ whose  multiplicative reduct $(S,\cdot)$ is an inverse semigroup with nilpotent subgroups. If the \ais\ $(B_2^1,+_{\mathrm{nat}},\cdot)$ satisfies all identities of $\mathcal{S}$, then $\mathcal{S}$ admits no finite identity basis.
\end{theorem}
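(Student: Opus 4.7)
The plan is to reduce Theorem~\ref{thm:comb-nfb-gen} to Theorem~\ref{thm:main} by verifying that the multiplicative reduct $(S,\cdot)$ satisfies an identity of the form $\mathbf v_{n,m}^{(h)} \approx (\mathbf v_{n,m}^{(h)})^2$ for all $n\ge2$ and some fixed $m,h\ge1$. Since $\mathcal{S}$ is finite, its subgroups share a common exponent $m$ and a common bound $c$ on their nilpotency class, and $(S,\cdot)$ has a finite principal series whose bottom term is a nilpotent group and whose other factors are Brandt semigroups over nilpotent groups, all of class at most $c$ and of exponent dividing $m$.

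The new technical ingredient is a nilpotent analogue of the ``group-killing'' step hidden in Lemma~\ref{lem:id-BI}: in every nilpotent group $G$ of class at most $c$ and exponent dividing $m$, every evaluation of $\mathbf v_{n,m}^{(c)}$ equals the identity of $G$. I would prove this by induction on $k$ showing that every evaluation of $\mathbf v_{n,m}^{(k)}$ lies in the $(k+1)$-st term $\gamma_{k+1}(G)$ of the lower central series. The base $k=1$ uses only that each variable of $\mathbf v_{n,m}^{(1)}$ has total exponent $2m$, which vanishes in the abelianisation $G/\gamma_2(G)$ of exponent dividing $m$. For the inductive step, the values $y_j$ of the inner words $\mathbf v_{n,m,j}^{(k-1)}$ lie in $\gamma_k(G)$ by hypothesis, and $\gamma_k(G)$ is central modulo $\gamma_{k+1}(G)$; hence modulo $\gamma_{k+1}(G)$ the outer $\mathbf v_{n,m}^{(1)}$ collapses to $y_1^{2m}\cdots y_{2n}^{2m}$, which is trivial in $G$ because the exponent of $G$ divides $m$.

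With this fact in hand, Lemmas~\ref{lem:id-BI} and~\ref{lem:id-E(S)cupD} and Propositions~\ref{prop:id-fh}, \ref{prop:id-fh+1} should extend to nilpotent subgroups essentially verbatim once their ``outer frame'' $\mathbf v^{(1)}$ is replaced by $\mathbf v^{(c)}$ at the top of each layer. The $\ell$-$r$ bookkeeping in those proofs is purely combinatorial and is driven by the outer $\mathbf v^{(1)}$-pattern present inside $\mathbf v^{(c)}$, so it still forces equality of left and right indices for nonzero evaluations in Brandt factors; only the group part changes, and it is now annihilated by the nilpotent calculation above instead of by abelianness. Climbing the principal series as in Proposition~\ref{prop:id-fh+1} then yields $\mathbf v_{n,m}^{(H)} \approx (\mathbf v_{n,m}^{(H)})^2$ for some $H$ depending on $c$ and on the length of the series, and Theorem~\ref{thm:main} finishes the argument.

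The main obstacle I anticipate is the faithful transfer of Lemma~\ref{lem:id-E(S)cupD}: when some variables of $\mathbf v_{n,m}^{(c)}$ take values in $E(S)$ and get absorbed in products, one must check that the residual word surviving in the Brandt factor $S_1$ still has enough of the iterated $\mathbf u_{p,q,m}$-shape for the nilpotent Brandt lemma of the previous paragraph to apply. Tracking how the absorption of $E(S)$-values interacts with the nested substitution structure of $\mathbf v^{(c)}$ is where I expect the proof to require the most care; the rest of the argument is a direct rerun of Section~\ref{sec:height} with $c$ in place of $1$ at each point where abelianness was used.
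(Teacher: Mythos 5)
Your reduction target (Theorem~\ref{thm:main}) is the right one, and your lower-central-series computation is correct: in a nilpotent group $G$ of class at most $c$ and exponent dividing $m$, induction on $k$ does show that every value of $\mathbf v_{n,m}^{(k)}$ lies in $\gamma_{k+1}(G)$, since the values of the inner blocks are central modulo $\gamma_{k+1}(G)$ and each occurs $2m$ times in the outer $\mathbf v_{n,m}^{(1)}$-frame. But this is not the route the paper takes, and your version of the remaining steps has a genuine gap exactly where you flag it. The transfer of Lemma~\ref{lem:id-E(S)cupD} is not a ``verbatim'' rerun: after the $E(S)$-valued letters are absorbed, the residual word is $\mathbf v_{n,m}^{(c)}$ with all occurrences of some variables erased, and for $c\ge 2$ this is no longer of the form $\mathbf u_{p,q,m}$, so Lemma~\ref{lem:id-BI} does not apply to it. The group component of the residual product is still killed by your nilpotent lemma (send the erased variables to the identity element), but the coordinate claim --- that a nonzero value $(\ell,g,r)$ has $\ell=r$ --- needs a new argument; one workable fix is to push the computation through the homomorphism $\mathcal{B}_{G,I}\to\mathcal{B}_{\{e\},I}$ collapsing the group, using that $\mathcal{B}_{\{e\},I}^1$ is a $(2,1)$-semigroup together with the observation that validity of $\mathbf v_{n,m}^{(h_0)}\approx(\mathbf v_{n,m}^{(h_0)})^2$ propagates to all $h\ge h_0$ because $\mathbf v_{n,m}^{(h)}$ is a substitution instance of $\mathbf v_{n,m}^{(h-1)}$. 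None of this is in your proposal, and the analogous bookkeeping in the climb of Proposition~\ref{prop:id-fh} (which superscript is needed at which level of the principal series once each layer carries a $\mathbf v^{(c)}$-frame) is likewise left open.

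The paper avoids the entire difficulty with a two-case split that you are missing. If $(S,\cdot)$ contains a non-abelian (nilpotent) subgroup, then $\mathcal{S}$ is already known to be nonfinitely based by \cite[Theorem~6.1]{JackRenZhao}, with no need for the $B_2^1$ hypothesis or for any of the machinery of Sect.~\ref{sec:height}. Otherwise every subgroup of $(S,\cdot)$ is abelian, so $(S,\cdot)$ is an $(h,m)$-semigroup for suitable $h$ and $m$, Proposition~\ref{prop:id-fh+1} applies exactly as stated, and Theorem~\ref{thm:main} finishes the proof. In other words, the word ``nilpotent'' in the hypothesis is never fed into the combinatorics of the words $\mathbf v_{n,m}^{(h)}$ at all; it serves only to trigger the cited external result. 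Your program could likely be completed and would yield a self-contained proof, but as written it replaces a five-line argument with an unfinished generalization of all of Sect.~\ref{sec:height}, and the step you yourself identify as the main obstacle is precisely the one that is not routine.
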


\begin{proof}
If $(S,\cdot)$ contains a non-abelian nilpotent subgroup, then $\mathcal{S}$ admits no finite identity basis by~\cite[Theorem~6.1]{JackRenZhao}. So we may assume that every subgroup of $(S,\cdot)$ is abelian. Since $\mathcal{S}$ is finite, there is some $m\ge 1$ that the exponent of every subgroup of $(S,\cdot)$ divides $m$ and for some $h>1$, there exists a principal series \eqref{eq:series} in $(S,\cdot)$. Thus, $(S,\cdot)$ is an $(h,m)$-semi\-group. By Proposition~\ref{prop:id-fh+1}, $(S,\cdot)$ satisfies the identity $\mathbf v_{n,m}^{(h+1)} \approx (\mathbf v_{n,m}^{(h+1)})^2$ for all $n\ge 2$, and therefore, Theorem~\ref{thm:main} applies.
\end{proof}

We are ready to prove Theorem~\ref{thm:comb-nfb}. Recall its statement: if $(B_2^1,\cdot,{}^{-1})$ satisfies all identities of a finite combinatorial inverse semigroup $(S,\cdot,{}^{-1})$, then the \ais\ $(S,+_{\mathrm{nat}},\cdot)$ admits no finite identity basis.

\begin{proof}[of Theorem~\ref{thm:comb-nfb}]
In view of Theorem~\ref{thm:comb-nfb-gen}, it remains to verify that if the 6-element Brandt monoid $(B_2^1,\cdot,{}^{-1})$ satisfies all identities of a finite combinatorial inverse semigroup $(S,\cdot,{}^{-1})$, then the \ais\ $(B_2^1,+_{\mathrm{nat}},\cdot)$ satisfies every identity of the \ais\ $(S,+_{\mathrm{nat}},\cdot)$.

Since $(S,\cdot,{}^{-1})$ is finite and combinatorial, it satisfies the identity $x^p\approx x^{p+1}$ for some $p$, and we may assume that $p\ge 2$. By Lemma~\ref{lem:as-ais} we have $x+_{\mathrm{nat}} y=(xy^{-1})^px$ for all $x,y\in S$. Since $(B_2^1,\cdot,{}^{-1})$ satisfies $x^2\approx x^3$, we may assume that $x+_{\mathrm{nat}} y$ expresses in $(B_2^1,\cdot,{}^{-1})$ as the same $(\cdot,{}^{-1})$-term $(xy^{-1})^px$. Now take any identity $\mathbf u\approx\mathbf v$ holding in $(S,+_{\mathrm{nat}},\cdot)$ and rewrite it into an identity $\mathbf u'\approx \mathbf v'$ in which $\mathbf u'$ and $\mathbf v'$ are $(\cdot,{}^{-1})$-terms. The latter identity then holds in $(B_2^1,\cdot,{}^{-1})$ and rewriting it back to $\mathbf u\approx\mathbf v$, we see that $\mathbf u\approx\mathbf v$ holds in $(B_2^1,+_{\mathrm{nat}},\cdot)$.
\end{proof}

Finally, we prove Theorem~\ref{thm:rook}. Recall that it states that the \ais\ $(R_t,+_{\mathrm{nat}},\cdot)$ built from the rook monoid $\mathcal{R}_t$ admits a finite identity basis if and only if $t=1$.

\begin{proof}[of Theorem~\ref{thm:rook}]
The rook monoid $\mathcal{R}_1$ is actually the 2-element semilattice $(Y_2,\cdot)$. We have already mentioned that the \ais\ $(Y_2,+_{\mathrm{nat}},\cdot)$ is finitely based; see Remark~\ref{rm:almostall}.

By Proposition~\ref{prop:rook2and3}(1) the rook monoid $\mathcal{R}_2$ satisfies the identity $\mathbf v_{n,2}^{(2)} \approx (\mathbf v_{n,2}^{(2)})^2$ for any $n\ge 2$. The \ais\ $(B_2^1,+_{\mathrm{nat}},\cdot)$ satisfies all identities of $(R_2,+_{\mathrm{nat}},\cdot)$ just because the former semiring is a subsemiring of the latter: to get the set $R_2$ of all zero-one $2\times 2$-matrices with at most one 1 in each row and column, one only has to add the matrix $\begin{pmatrix}0&1\\1&0\end{pmatrix}$ to the six matrices in~\eqref{eq:b21}. Thus, Theorem~\ref{thm:main} applies to $(R_2,+_{\mathrm{nat}},\cdot)$.

By Proposition~\ref{prop:rook2and3}(2) the rook monoid $\mathcal{R}_3$ satisfies the identity $\mathbf v_{n,6}^{(4)} \approx (\mathbf v_{n,6}^{(4)})^2$ for any $n\ge 2$. Clearly, the \ais\ $(R_2,+_{\mathrm{nat}},\cdot)$ embeds into $(R_3,+_{\mathrm{nat}},\cdot)$ whence $(B_2^1,+_{\mathrm{nat}},\cdot)$ satisfies all identities of $(R_3,+_{\mathrm{nat}},\cdot)$. Again, Theorem~\ref{thm:main} applies to $(R_3,+_{\mathrm{nat}},\cdot)$.

Finally, if $t\ge4$, the rook monoid $\mathcal{R}_t$ has the symmetric group $\mathrm{Sym}_t$ as its group of units. The group $\mathrm{Sym}_t$ with $t\ge4$ possesses non-abelian nilpotent subgroups, for instance, the dihedral group of order 8. By~\cite[Theorem~6.1]{JackRenZhao} the \ais\ $(R_t,+_{\mathrm{nat}},\cdot)$ admits no finite identity basis.
\end{proof}


\begin{thebibliography}{99}
\bibitem{AEI03}
Aceto, L., \'Esik, Z., Ing\'olfsd\'ottir, A.: The max-plus algebra of the natural numbers has no finite equational basis, Theoret. Comput. Sci. \textbf{293}, 169--188 (2003)

\bibitem{AM11}
Andr\'eka, H., Mikul\'as, Sz.: Axiomatizability of positive algebras of binary relations, Algebra Universalis \textbf{66}, 7--34 (2011)

\bibitem{Dolinka07}
Dolinka, I.: A nonfinitely based finite semiring. Internat. J. Algebra Comput. \textbf{17}, 1537--1551 (2007)

\bibitem{Dolinka09a}
Dolinka, I.: A class of inherently nonfinitely based semirings. Algebra Universalis \textbf{60}, 19--35 (2009)

\bibitem{Dolinka09b}
Dolinka, I.:  A remark on nonfinitely based semirings, Semigroup Forum \textbf{78}, 368--373 (2009)

\bibitem{Dolinka09c}
Dolinka, I.: The finite basis problem for endomorphism semirings of finite semilattices with zero, Algebra Universalis \textbf{61}, 441--448 (2009)

\bibitem{JackRenZhao}
Jackson, M., Ren, Miaomiao, Zhao, Xianzhong: Nonfinitely based ai-semirings with finitely based semigroup reducts, Preprint, see \url{https://arxiv.org/abs/2112.13918v1} (2021)

\bibitem{JackStokes}
Jackson, M., Stokes, T.: Identities in the algebra of partial maps, Internat. J. Algebra Comput. \textbf{16}, 1131--1159 (2006)

\bibitem{JKM09}
Je\v{z}ek, J., Kepka, T., Mar\'oti, M.: The endomorphism semiring of a semilattice. Semigroup Forum \textbf{78}, 21--26 (2009).

\bibitem{Jip17}
Jipsen, P.: Relation algebras, idempotent semirings and generalized bunched implication algebras. In: P.~H\"ofner, D. Pous, G. Struth (eds.), Relational and Algebraic Methods in Computer Science, Lecture Notes in Computer Science, vol. 10226, pp. 144--158. Springer, Cham (2017)

\bibitem{Kad91}
Ka\softd{}ourek, J.: On varieties of combinatorial inverse semigroups. I. Semigroup Forum \textbf{43}, 305--330 (1991)

\bibitem{Kad03}
Ka\softd{}ourek, J.: On bases of identities of finite inverse semigroups with solvable subgroups. Semigroup Forum \textbf{67}, 317--343 (2003)

\bibitem{Law99}
Lawson, M.V.: Inverse Semigroups. The Theory of Partial Symmetries. World Scientific, Singapore (1999)

\bibitem{Leech95}
Leech, J.: Inverse monoids with a natural semilattice ordering. Proc. London Math. Soc. \textbf{s3-70}(1), 146--182 (1995)

\bibitem{Mash79}
Mashevitzky, G.I.: Identities in Brandt semigroups. In: Semigroup Varieties and Semigroups of Endomorphisms, pp. 126--137. Leningrad State Pedagogical Institute, Leningrad (1979) (Russian)

\bibitem{Munn57}
Munn, W.D.: The characters of the symmetric inverse semigroup. Math. Proc. Cambridge Philos. Soc. 53(1), 13--18 (1957)

\bibitem{Pet84}
Petrich, M.: Inverse Semigroups. John Wiley \& Sons, New York (1984)

\bibitem{Pin95}
Pin, J.-\'E.: $BG=PG$, a success story. In: Fountain, J. (ed.) Semigroups, Formal Languages and Groups. NATO ASI Ser., Ser. C: Math. Phys. Sci., vol. 466, pp. 33--47. Kluwer Academic Publishers, Dordrecht--Boston--London (1995)

\bibitem{Pin98}
Pin, J-\'E.:  Tropical semirings. In: J. Gunawardena (ed.), Idempotency, Publications of the Newton Institute, vol. 11, pp. 50--69. Cambridge University Press, Cambridge (1998).

\bibitem{Polak01}
Pol\'ak, L.: Syntactic semiring of a language. In: J. Sgall, A. Pultr, P. Kolman (eds.), Mathematical Foundations of Computer Science 2001, Lecture Notes in Computer Science, vol. 2136, pp. 611--620.  Springer-Verlag, Berlin-Heidelberg (2001)

\bibitem{Rei08}
Reilly, N.R.: The interval $[\mathbf{B}_2, \mathbf{NB}_2]$ in the lattice of Rees--Sushkevich varieties. Algebra Universalis \textbf{59}(3-4), 345--363 (2008)

\bibitem{Sch73}
Schein, B.M.: Completions, translational hulls and ideal extensions of inverse semigroups. Czechoslovak Math. J. \textbf{23}(4), 575--610 (1973)

\bibitem{Sol02}
Solomon, L.: Representations of the rook monoid. J. Algebra \textbf{256}(2), 309--342 (2002)


\bibitem{Vol21}
Volkov, M.V.: Semiring identities of the Brandt monoid. Algebra Universalis \textbf{82}, Article no. 42 (2021)
\end{thebibliography}
\end{document}